\newtheorem{remark}{Remark}[section]
\def\S{{\mathfrak s}}
\def\T{{\mathcal T}}
\def\O{{\mathcal O}}
\def\B{{\mathcal B}}
\def\A{{\mathcal A}}
\def\bn{{\bf n}}
\def\pT{{\partial T}}
\def\3bar{{|\!|\!|}}
\def\bbeta{{\boldsymbol\beta}}
\newtheorem{algorithm}{SWG Algorithm}[section]
\newtheorem{FD-algorithm}{5-Point Finite Difference Algorithm}[section]
\newcommand{\vertiii}[1]{{\left\vert\kern-0.25ex\left\vert\kern-0.25ex\left\vert #1
    \right\vert\kern-0.25ex\right\vert\kern-0.25ex\right\vert}}
\title{A Simplified Weak Galerkin Finite Element Method: Algorithm and Error Estimates}
\author{Yujie Liu\thanks{School of Data and Computer Science, Sun Yat-sen University, Guangzhou, 510275, China (liuyujie5@mail.sysu.edu.cn). The research of Liu was partially supported by Guangdong Provincial Natural Science Foundation (No. 2017A030310285), Shandong Provincial natural Science Foundation (No. ZR2016AB15) and Youthful Teacher Foster Plan Of Sun Yat-Sen University (No. 171gpy118),} \and Junping Wang
\thanks{Division of Mathematical Sciences, National Science Foundation, Alexandria, VA 22314 (jwang@nsf.gov). The research of Wang was supported by the NSF IR/D program, while working at National Science Foundation. However, any opinion, finding, and conclusions or recommendations
expressed in this material are those of the author and do not
necessarily reflect the views of the National Science Foundation.}}
\begin{document}

\maketitle
\begin{abstract}
In this article a simplified weak Galerkin finite element method is developed for the Dirichlet boundary value problem of convection-diffusion-reaction equations. The simplified weak Galerkin method utilizes only the degrees of freedom on the boundary of each element and, hence, has significantly reduced computational complexity over the regular weak Galerkin finite element method. A stability and some optimal order error estimates in the $H^1$ and $L^2$ norms are established for the corresponding numerical solutions. Numerical results are presented to verify the theory error estimates and a superconvergence phenomena on rectangular partitions.
\end{abstract}

\begin{keywords} convection-diffusion-reaction equations, simplified weak Galerkin, finite element methods, error estimates.
\end{keywords}

\begin{AMS}
Primary, 65N30, 65N15; Secondary, 35J50
\end{AMS}

\pagestyle{myheadings}

%

\section{Introduction}
This paper is concerned with the development of a simplified formulation for the weak Galerkin finite element method for second order elliptic equations. For simplicity, consider the model problem that seeks an unknown function $u=u(x)$ satisfying
\begin{eqnarray}
-\nabla\cdot(\alpha\nabla u) + \bbeta\cdot\nabla u + cu&=&f\quad {\rm in}\  \Omega  \label{ellipticbdy}\\
u&=&g\quad {\rm on}\ \partial\Omega \label{ellipticbc}
\end{eqnarray}
where $\Omega$ is a bounded polytopal domain in $\mathbb{R}^d \;(d\ge 2)$ with boundary $\partial\Omega$, $\alpha=\alpha(x)$ is the diffusion coefficient, $\bbeta=\bbeta(x)$ is the convection, and $c=c(x)$ is the reaction coefficient in relevant applications. We assume that $\alpha$ is sufficient smooth, $\bbeta\in [W^{1,\infty}(\Omega)]^d$, and $c$ is piecewise smooth with respect to a partition of the domain. For well-posedness of the problem \eqref{ellipticbdy}-\eqref{ellipticbc}, we assume $f=f(x)\in L^2(\Omega)$, $g=g(x)\in H^{\frac12}(\partial\Omega)$, and
\begin{equation}\label{EQ:positive}
c-\frac12\nabla\cdot\bbeta \ge 0,\qquad \alpha(x) \ge \alpha_0 \qquad \forall x \in \Omega
\end{equation}
for a constant $\alpha_0>0$.

The model problem \eqref{ellipticbdy}-\eqref{ellipticbc} arises from many scientific applications such as fluid flow in porous media. Mostly importantly, this model problem has served, and still serves, the scientific computing community as a testbed in the search and design of new and efficient computational algorithms for partial differential equations. The classical Galerkin finite element method (see, e.g., \cite{ciarlet-fem,StrangFix,gr}) is particularly a numerical technique originated from the study of elliptic problems closed related to \eqref{ellipticbdy}-\eqref{ellipticbc} or its variations. In the last three decades, various finite element methods using discontinuous trial and test functions, including discontinuous Galerkin (DG) methods and weak Galerkin (WG) methods, have been developed for numerical solutions of partial differential equations. These developments were often tested over testbed problems such as \eqref{ellipticbdy}-\eqref{ellipticbc} before they were generalized or applied to more complex problems in science and engineering. The DG method, also known as the interior penalty method in different contexts, was originated in early 70s of the last century for a numerical study of model problems such as \eqref{ellipticbdy}-\eqref{ellipticbc}; see, e.g., \cite{babuska-dg, douglas-dg, nitsche-dg, wheeler-dg} for early incubations and \cite{arnold-dg,pietro-ern,hesthaven, riviere} for a detailed discussion and recent developments.

The weak Galerkin finite element method is a recently developed discretization framework for partial differential equations \cite{WangYe_2013,wy3655,mwy,ww-survey}. With new concepts referred to as weak differential operators (e.g., weak gradient, weak curl, weak Laplacian etc.) and weak continuity through the use of various stabilizers, the method allows the use of totally discontinuous functions and provides stable numerical schemes that are parameter-independent or free of locking \cite{LockingFree}. For the convection-diffusion-reaction equation \eqref{ellipticbdy}-\eqref{ellipticbc}, the recent work in the context of weak Galerkin includes the algorithm developed and analyzed in \cite{Chen-CD-WG}, the one in \cite{LinRunchang} for singularly perturbed problems, and an earlier one in \cite{ZhangTie}. The WG finite element method has been rapidly developed and applied to several different types of problems, including second order elliptic problems, the Stokes and Navier-Stokes equations, the biharmonic and elasticity equations, div-curl systems and the Maxwell's equations, etc. The latest development of the WG methods is the prime-dual formulation for problems that are either nonsymmetric or do not have variational forms friendly for numerical use. Details on the new developments can be found in \cite{WangWang_2016} for second order elliptic equations in nondivergence form, \cite{ww2017} for the Fokker-Planck equation, and \cite{WangWang-EC} for elliptic Cauchy problems.

The typical WG method for the model problem \eqref{ellipticbdy}-\eqref{ellipticbc} seeks weak finite element approximations $u_h=\{u_0, u_b\}$ satisfying $u_b|_{\partial \Omega} = Q_b g$ and
\begin{equation}\label{EQ:826:001}
S(u_h, v)+ (\alpha \nabla_w u_h, \nabla_w v) + (\bbeta\cdot\nabla_w u_h, v_0) + (c u_0, v_0) = (f,v_0)
\end{equation}
for all test functions $v=\{v_0, v_b\}$ satisfying $v_b|_{\partial\Omega}=0$, where $Q_bg$ is an interpolation of the Dirichlet boundary data, $\nabla_w$ is the discrete weak gradient operator, and $S(\cdot,\cdot)$ is a properly selected stabilizer that gives weak continuities for the numerical solutions. The numerical solution $u_h$ consists of two components: the approximation $u_0$ on each element and the approximation $u_b$ on the boundary of each element. To reduce the computational complexity, some hybridized formulations have been introduced in \cite{MWY-HWG,ww-hwg} for the method when applied to the diffusion equation and the biharmonic equation through the elimination of the degrees of freedom associated with the unknown function $u_0$ locally on each element. In the superconvergence study for WG \cite{LiDanWW} on rectangular elements, this hybridized formulation was further simplified in the description of the numerical algorithm, yielding a {\em simplified weak Galerkin (SWG) finite element scheme} for the diffusion equation. In our further investigation of the SWG to the convection-diffusion-reaction equation \eqref{ellipticbdy}, we came to the conclusion that {\em SWG represents a new discretization scheme that is different from the usual WG} through a simple elimination of the unknown $u_0$. As a result, we believe that a systematic study of the SWG for the convection-diffusion-reaction problem \eqref{ellipticbdy}-\eqref{ellipticbc} should be conducted for its stability and convergence. This paper is in response to this observation and shall provide a mathematical theory for the stability and the convergence of the simplified weak Galerkin finite element method for the model problem \eqref{ellipticbdy}-\eqref{ellipticbc}. We believe that the result of this paper can be extended to other types of modeling equations.

The paper is organized as follows: In Section \ref{section-SWG-polymesh}, we shall describe the {\em simplified weak Galerkin} finite element method for \eqref{ellipticbdy}-\eqref{ellipticbc} on general polygonal partitions. In Section \ref{Section:ESM}, we shall present a computational formula for the element stiffness matrices and the element load vectors from SWG. In Section \ref{Section:SWP}, we provide a mathematical theory for the stability and well-posedness of the SWG scheme. Sections \ref{sectionEE} and \ref{sectionEE-L2} are devoted to a discussion of the error estimates in a discrete $H^1$ and the $L^2$ norm for the numerical solutions. Finally, in Section \ref{numerical-experiments}, we present some numerical results to demonstrate the efficiency and accuracy of the SWG method.

Throughout the rest of the paper, we assume $d=2$ and shall use the standard notations for Sobolev spaces and norms \cite{ciarlet-fem,gr}. For any open set $D\subset\mathbb{R}^{2}$, $\|\cdot\|_{s,D}$ and $(\cdot,\cdot)_{s,D}$ denote the norm and inner-product in the Sobolev space $H^s(D)$ consisting of square integrable partial derivatives up to order $s$. When $s=0$ or $D=\Omega$, we shall drop the corresponding subscripts in the norm and inner-product notation.

\section{Algorithm on Polymesh}\label{section-SWG-polymesh}
Assume that the domain is of polygonal type and is partitioned into non-overlap polygons $\T_h=\{T\}$ that are shape regular. For each $T\in \T_h$, denote by $h_T$ its diameter and by $N$ the number of edges. For each edge $e_i, \ i=1,\ldots, N$, denote by $M_i$ the midpoints and $\bn_i$ the outward normal direction of $e_i$ (see Fig. \ref{fig.hexahedron} for an illuatration). The meshsize of $\T_h$ is defined as $h=\max_{T\in\T_h} h_T$.

Let $v_b$ be a piecewise constant function defined on the boundary of $T$, i.e.,
\[
v_b|_{e_i} =v_{b,i},
\]
with $v_{b,i}$ being a constant. We define the weak gradient of $v_b$ on $T$ by:
\begin{equation}\label{DefWGpoly}
\nabla_w v_b:=\displaystyle\frac{1}{|T|}\sum_{i=1}^N v_{b,i}|e_i|\bf{n_i},
\end{equation}
where $|e_i|$ is the length of the edge $e_i$ and $|T|$ is the area of the element $T$. It is not hard to see that the weak gradient $\nabla_w v_b$ satisfies the following equation:
\begin{equation}\label{DefWGpoly-new}
(\nabla_w v_b, \bm{\phi})_T=\langle v_b, \bm{\phi}\cdot\bn\rangle_\pT
\end{equation}
for all constant vector $\bm{\phi}$. Here and in what follows of the paper, $\langle\cdot,\cdot\rangle_\pT$ stands for the usual inner product in $L^2(\pT)$.

Denote by $W(T)$ the space of piecewise constant functions on $\pT$. The global finite element space $W(\T_h)$ is constructed by patching together all the local elements $W(T)$ through single values on interior edges. The subspace of $W_h(\T_h)$ consisting of functions with vanishing boundary value is denoted as $W_h^0(\T_h)$.

We use the conventional notation of ${P}_j(T)$ for the space of polynomials of degree $j\ge 0$ on $T$. For each $v_b\in W(T)$, we associate it with a linear extension in $T$, denoted as $\S(v_b)\in {P}_1 (T)$, satisfying
\begin{equation}\label{Def.extension}
\sum_{i=1}^{N}(\S(v_b)(M_i) -v_{b,i})\phi(M_i)|e_i|=0,\quad \forall\; \phi\in {P}_1(T).
\end{equation}
It is easy to see that $\S(u_b)$ is well defined by \eqref{Def.extension}, and its computation is local and straightforward. In fact, $\S(u_b)$ can be viewed as an extension of $u_b$ from $\partial T$ to $T$ through a least-squares fitting.

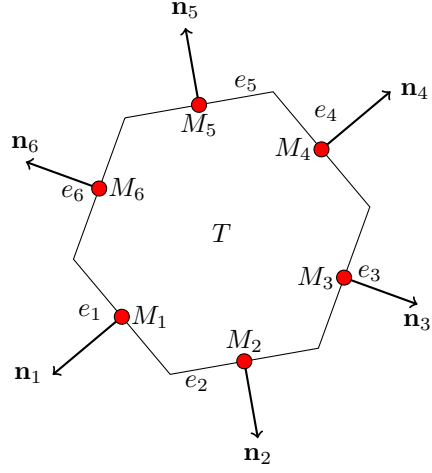
\begin{figure}[!h]
\begin{center}
\begin{tikzpicture}[rotate=40, scale =2.0]

    \path (-0.8660, 0.5) coordinate (A1);
    \path (-0.8660,-0.5) coordinate (A2);
    \path (0.     ,-1  ) coordinate (A3);
    \path (0.8660 ,-0.5) coordinate (A4);
    \path (0.8660 ,0.5 ) coordinate (A5);
    \path (0.     ,1.0 ) coordinate (A6);

    \path (0.0,0.0) coordinate (center);

    \path (-0.8660,0.   )    coordinate (A1half);
    \path (-0.433 ,-0.75)    coordinate (A2half);
    \path ( 0.433 ,-0.75)    coordinate (A3half);
    \path ( 0.8660,0.   )    coordinate (A4half);
    \path ( 0.433 , 0.75)    coordinate (A5half);
    \path (-0.433 , 0.75)    coordinate (A6half);

    \path (A1half) ++(-0.25 ,  0.25)  coordinate (A1halfe);
    \path (A2half) ++(-0.225, -0.0 )  coordinate (A2halfe);
    \path (A3half) ++( 0.225, -0.0 )  coordinate (A3halfe);
    \path (A4half) ++( 0.25 ,  0.25)  coordinate (A4halfe);
    \path (A5half) ++( 0.225,  0.0 )  coordinate (A5halfe);
    \path (A6half) ++(-0.225,  0.0 )  coordinate (A6halfe);

    \path (A1half) ++(-0.6   ,   0   )  coordinate (A1To);
    \path (A2half) ++(-0.2598,  -0.45)  coordinate (A2To);
    \path (A3half) ++( 0.2598,  -0.45)  coordinate (A3To);
    \path (A4half) ++( 0.6   ,   0   )  coordinate (A4To);
    \path (A5half) ++( 0.2598,   0.45)  coordinate (A5To);
    \path (A6half) ++(-0.2598,   0.45)  coordinate (A6To);
    \draw (A6) -- (A1) -- (A2) -- (A3)--(A4)-- (A5)--(A6);

    \draw node at (center) {$T$};
    \draw node[right] at (A1half) {$M_{1}$};
    \draw node[above] at (A2half) {$M_{2}$};
    \draw node[left] at (A3half) {$M_{3}$};
    \draw node[left]  at (A4half) {$M_{4}$};
    \draw node[below] at (A5half) {$M_{5}$};
    \draw node[right] at (A6half) {$M_{6}$};

    \draw node[right] at (A1halfe) {$e_{1}$};
    \draw node[left] at (A2halfe) {$e_{2}$};
    \draw node[below] at (A3halfe) {$e_{3}$};
    \draw node[below]  at (A4halfe) {$e_{4}$};
    \draw node[right] at (A5halfe) {$e_{5}$};
    \draw node[above] at (A6halfe) {$e_{6}$};

    \draw[->,thick] (A1half) -- (A1To) node[left] {$\mathbf{n}_1$};
    \draw[->,thick] (A2half) -- (A2To) node[below]{$\mathbf{n}_2$};
    \draw[->,thick] (A3half) -- (A3To) node[below]{$\mathbf{n}_3$};
    \draw[->,thick] (A4half) -- (A4To) node[right]{$\mathbf{n}_4$};
    \draw[->,thick] (A5half) -- (A5To) node[above]{$\mathbf{n}_5$};
    \draw[->,thick] (A6half) -- (A6To) node[above]{$\mathbf{n}_6$};

    \draw [fill=red] (A1half) circle (0.05cm);
    \draw [fill=red] (A2half) circle (0.05cm);
    \draw [fill=red] (A3half) circle (0.05cm);
    \draw [fill=red] (A4half) circle (0.05cm);
    \draw [fill=red] (A5half) circle (0.05cm);
    \draw [fill=red] (A6half) circle (0.05cm);

\end{tikzpicture}
\end{center}
\caption{An illustrative polygonal element.}
\label{fig.hexahedron}
\end{figure}

On each element $T \in \T_h $, we introduce the following bilinear forms:
\begin{eqnarray}
a_T(u_b,v_b)&:= & (\alpha\nabla_w u_b, \nabla_w v_b)_T, \label{EQ:aform}\\
b_T(u_b,v_b)&:= & (\bbeta\cdot\nabla_w u_b, \S(v_b))_T,\label{EQ:bform}\\
c_T(u_b,v_b)&:= & (c\S(u_b), \S(v_b))_T.\label{EQ:cform}
\end{eqnarray}
For simplicity, we set
\begin{equation}\label{EQ:Fulla}
\mathcal{B}_T(u_b,v_b):=a_T(u_b,v_b) + b_T(u_b,v_b) + c_T(u_b,v_b)
\end{equation}
for $u_b, v_b \in W(T)$.
We further introduce the stabilizer
\begin{equation}\label{EQ:stabilizer}
\begin{split}
S_T(u_b,v_b):= & h^{-1}\sum_{i=1}^N (\S(u_b)(M_i)-u_{b,i})(\S(v_b)(M_i)-v_{b,i})|e_i|\\
             = & h^{-1}\langle Q_b\S(u_b)-u_{b},Q_b\S(v_b)-v_{b}\rangle_{\partial T},
             \end{split}
\end{equation}
where $Q_b$ is the $L^2$ projection operator onto $W(T)$; namely $Q_b u$ is the average of $u$ on each edge. In particular, $Q_b(g)$ is well-defined and takes the average of the Dirichlet data on each boundary edge.
\medskip

\begin{algorithm}
The simplified weak Galerkin (SWG) scheme for the elliptic equation \eqref{ellipticbdy}-\eqref{ellipticbc} seeks
$u_b \in W_h(\T_h)$ satisfying $u_b = Q_b(g)$ on $\partial\Omega$ and
\begin{equation}\label{equation.SWG}
\A(u_b, v_b) =(f, \S(v_b))\qquad \forall v_b \in W_h^0(\T_h),
\end{equation}
where $\A(u_b,v_b):=\kappa S(u_b,v_b)+ \B(u_b, v_b)$,
\begin{eqnarray*}
  S(u_b,v_b) &=& \sum_{T\in\T_h} S_T(u_b,v_b),\\
  \B(u_b,v_b)&=& \sum_{T\in\T_h} \mathcal{B}_T(u_b, v_b)
\end{eqnarray*}
are bilinear forms in $W_h(\T_h)$ and $(f,\S(v_b)):=\sum_{T\in\T_h} (f, \S(v_b))_T$ is a linear form in $W_h(\T_h)$.
\end{algorithm}
\medskip

\section{Element Stiffness Matrices}\label{Section:ESM}
The simplified weak Galerkin finite element method \eqref{equation.SWG} is user-friendly in computer implementation. In this section, we present a formula for the computation of the element stiffness matrices and the element load vector on general polygonal elements.

\begin{theorem}
Let $T\in \T_h$ be a polygonal element of $N$ sides. Denote by $X_{u_b}$ the vector representation of $u_b$ given by $(u_{b,1},u_{b,2},\ldots,u_{b,N})^T$. Then, the element stiffness matrix and the element load vector for the SWG scheme \eqref{equation.SWG} are given in a block matrix form as follows:
\begin{equation}\label{EQ:ESM:01}
(\kappa h^{-1}A^T + B + R + C)X_{u_b} \cong F,
\end{equation}
where the block components in \eqref{EQ:ESM:01} are given by:
\begin{itemize}
\item[(1)] $A:=\{a_{i,j}\}_{i,j=1}^N = E- EM(M^TEM)^{-1}M^TE$,
\item[(2)] $B:=\{b_{i,j}\}_{i,j=1}^N$, with $b_{i,j} = (\alpha\bm{n}_i,\bm{n}_j)_{T}\displaystyle\frac{|e_i||e_j|}{|T|^2}$,
    \item[(3)] $R:=\{r_{ij}\}_{i,j=1}^N$, with $r_{ij}= \frac{|e_j|}{|T|}\int_T \bbeta\cdot\bm{n}_j \zeta_i dT$,
        \item[(4)] $C:=\{c_{ij}\}_{i,j=1}^N$, with $c_{ij}= \int_T c\zeta_j\zeta_i dT$,
\item[(5)] $F:=\{f_{i}\}_{i=1}^N$, with $f_{i} = \int_{T}f(x,y) \zeta_i(x,y) dT$,
 \item[(6)] $D:=\{d_{j,i}\}_{3\times N}=(M^TEM)^{-1}M^TE$ and $\zeta_i = d_{1,i} + d_{2,i} (x-x_T) + d_{3,i} (y-y_T)$,
\item[(7)] $M$ and $E$ are given by
\begin{equation*}
M=
\begin{bmatrix}
1      & x_{1} - x_T & y_{1} - y_T\\
1      & x_{2} - x_T & y_{2} - y_T\\
\vdots & \vdots        & \vdots       \\
1      & x_{N} - x_T & y_{N} - y_T\\
\end{bmatrix}_{N\times3},\;
E=
\begin{bmatrix}
|e_1| &       &           &       \\
      & |e_2| &           &       \\
      &       & \ddots    &       \\
      &       &           & |e_N| \\
\end{bmatrix}_{N\times N}.
\end{equation*}
\end{itemize}
Here $M_T=(x_T,y_T)$ is any point on the plane (e.g., the center of $T$ as a specific case), $(x_i, y_i)$ is the midpoint of $e_i$, $|e_i|$ is the length of edge $e_i$, $\bm{n}_i$ is the unit outward normal vector on $e_i$, and $|T|$ is the area of the element $T$.
\end{theorem}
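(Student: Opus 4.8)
The plan is to turn the local form $\mathcal{A}_T(u_b,v_b)=\kappa S_T(u_b,v_b)+a_T(u_b,v_b)+b_T(u_b,v_b)+c_T(u_b,v_b)$ and the local load $(f,\S(v_b))_T$ into explicit bilinear forms in the coefficient vectors $X_{u_b},X_{v_b}\in\mathbb{R}^N$, and then read off the blocks; the only non-routine algebra is the identity that produces the block $A$. The starting point is a closed formula for the extension $\S(v_b)$. Write $\S(v_b)=c_1+c_2(x-x_T)+c_3(y-y_T)$ and set $\mathbf{c}=(c_1,c_2,c_3)^T$, so that the vector of midpoint values is $M\mathbf{c}$ with $M$ as in item~(7). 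Testing the defining relation \eqref{Def.extension} against the three basis functions $1,\ x-x_T,\ y-y_T$ rewrites it as the normal equations $M^TE(M\mathbf{c}-X_{v_b})=0$; since the midpoints of a non-degenerate polygon are not collinear, $M$ has full column rank and $E$ is a positive definite diagonal matrix, hence $M^TEM$ is invertible (as is already implicit in the statement) and $\mathbf{c}=DX_{v_b}$ with $D=(M^TEM)^{-1}M^TE$. Re-expanding the affine function in the monomial basis yields $\S(v_b)=\sum_{i=1}^N v_{b,i}\zeta_i$ with the $\zeta_i$ exactly as in item~(6); in particular $\zeta_i(M_k)=(MD)_{ki}$, so evaluating at the midpoints gives $\S(v_b)(M_i)-v_{b,i}=\big((MD-I)X_{v_b}\big)_i$.

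With this in hand I would substitute into each form. By \eqref{DefWGpoly} the weak gradient $\nabla_w v_b=\frac1{|T|}\sum_{j}v_{b,j}|e_j|\bn_j$ is a \emph{constant} vector on $T$, whence $a_T(u_b,v_b)=\sum_{i,j}u_{b,i}v_{b,j}\frac{|e_i||e_j|}{|T|^2}(\alpha\bn_i,\bn_j)_T=X_{v_b}^TBX_{u_b}$ with $B$ as in (2); likewise $b_T(u_b,v_b)=\sum_{i,j}u_{b,i}v_{b,j}\frac{|e_i|}{|T|}\int_T(\bbeta\cdot\bn_i)\zeta_j\,dT=X_{v_b}^TRX_{u_b}$ with $R$ as in (3) --- here one must keep $\bn$ and $\zeta$ attached to the correct indices, since $R$ is not symmetric --- and $c_T(u_b,v_b)=\sum_{i,j}u_{b,i}v_{b,j}\int_Tc\,\zeta_i\zeta_j\,dT=X_{v_b}^TCX_{u_b}$ with $C$ as in (4), while $(f,\S(v_b))_T=\sum_i v_{b,i}\int_Tf\zeta_i\,dT=X_{v_b}^TF$ with $F$ as in (5). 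For the stabilizer \eqref{EQ:stabilizer}, the midpoint identity above gives directly $S_T(u_b,v_b)=h^{-1}X_{v_b}^T(MD-I)^TE(MD-I)X_{u_b}$.

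The key computation is then $(MD-I)^TE(MD-I)=A$. Expanding, $(MD-I)^TE(MD-I)=D^TM^TEMD-D^TM^TE-EMD+E$; since $M^TEMD=M^TE$ straight from the definition of $D$, and $E$ and $M^TEM$ are symmetric, each of the first three terms equals $EM(M^TEM)^{-1}M^TE=EMD$, so the sum telescopes to $E-EM(M^TEM)^{-1}M^TE=A$. In particular $A$ is symmetric, so $A=A^T$. Adding up the contributions we get $\mathcal{A}_T(u_b,v_b)=X_{v_b}^T(\kappa h^{-1}A+B+R+C)X_{u_b}$ and $(f,\S(v_b))_T=X_{v_b}^TF$ for every $v_b\in W(T)$; because $X_{v_b}$ ranges over all of $\mathbb{R}^N$, the local contribution of the SWG equation \eqref{equation.SWG} is exactly the matrix equation \eqref{EQ:ESM:01}, which is then assembled over $\T_h$ in the standard way. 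The only real obstacle is the bookkeeping: carrying the transposes and index orders correctly through the non-symmetric convection block and through the projection-matrix identity for $A$.
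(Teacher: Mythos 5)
Your proposal is correct and follows essentially the same route as the paper: represent $\S(v_b)$ via the normal equations $\mathbf{c}=DX_{v_b}$ with $D=(M^TEM)^{-1}M^TE$, substitute into each bilinear form, and read off the blocks $B$, $R$, $C$, $F$ by direct expansion. Your treatment of the stabilizer block — expanding $h^{-1}X_{v_b}^T(MD-I)^TE(MD-I)X_{u_b}$ and telescoping via $M^TEMD=M^TE$ to get $E-EM(M^TEM)^{-1}M^TE=A$ — is just the matrix form of the orthogonality in \eqref{Def.extension} that the paper uses to drop the $\S(v_b)(M_i)$ factor, and your observation that $A=A^T$ reconciles your result with the $A^T$ appearing in \eqref{EQ:ESM:01}.
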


From \eqref{equation.SWG}, the element stiffness matrix on $T\in\T_h$ consists of two sub-matrices corresponding to the following forms:
$$
S_T(u_b,v_b)\ \mbox{and } \B_T(u_b, v_b).
$$
The bilinear form $\B_T(\cdot,\cdot)$ is composed of three bilinear forms given by \eqref{EQ:Fulla}. The rest of this section is devoted to a computation of the element stiffness matrices for each of the bilinear forms involved.

\subsection{The stiffness matrix for $S_T(\cdot,\cdot)$}
For the element stiffness matrix corresponding to $S_T(u_b,v_b)$, the key is to compute $\S(u_b)$ and $\S(v_b)$ which can be accomplished through its definition \eqref{Def.extension}; readers are referred to \cite{LiuWang_SWG_Stokes_2018} for a detailed derivation. Specifically, let $M_T=(x_T,y_T)$ be the center of T (or any point on the plane), the extension $\S(u_b)$ can be represented as follows:
\[
\S(u_b)=\gamma_0 + \gamma_1 (x-x_T) + \gamma_2 (y-y_T),
\]
where
\begin{equation}\label{EQ:MyEq.001}
\begin{bmatrix}
\gamma_0  \\
\gamma_1  \\
\gamma_2  \\
\end{bmatrix}
=(M^TEM)^{-1}M^TE
\begin{bmatrix}
u_{b,1} \\
u_{b,2} \\
\vdots  \\
u_{b,N} \\
\end{bmatrix}.
\end{equation}
From $\S(u_b)=\gamma_0 + \gamma_1 (x-x_T) + \gamma_2 (y-y_T)$ and (\ref{EQ:MyEq.001}), we have
\begin{equation}
\begin{bmatrix}
\S(u_{b})(M_1) \\
\S(u_{b})(M_2) \\
\vdots  \\
\S(u_{b})(M_N) \\
\end{bmatrix}
=
M
\begin{bmatrix}
\gamma_0\\
\gamma_1\\
\gamma_2\\
\end{bmatrix}
=M(M^TEM)^{-1}M^TE
\begin{bmatrix}
u_{b,1} \\
u_{b,2} \\
\vdots  \\
u_{b,N} \\
\end{bmatrix}.
\end{equation}

Let $v_b\in W(T)$ be the basis function corresponding to the edge $e_j$ of $T$:
\begin{equation*}
v_b=\left\{
\begin{array}{lllll}
1, \qquad \text{ on } e_j,\\
0, \qquad \text{ otherwise}.\\
\end{array}
\right.
\end{equation*}
Then the coefficient $(\tilde{\gamma}_0,\tilde{\gamma}_1,\tilde{\gamma}_2)^T$ for $\S(v_b)$ is given by
\begin{equation*}
\begin{bmatrix}
\tilde{\gamma}_0  \\
\tilde{\gamma}_1  \\
\tilde{\gamma}_2  \\
\end{bmatrix}
=(M^TEM)^{-1}M^TE
\begin{bmatrix}
v_{b,1}\\
\vdots \\
v_{b,j}\\
\vdots \\
v_{b,N}\\
\end{bmatrix}
=(M^TEM)^{-1}M^TE
\begin{bmatrix}
0\\
\vdots \\
1\\
\vdots \\
0\\
\end{bmatrix}
\triangleq
\begin{bmatrix}
d_{1,j}\\
d_{2,j}\\
d_{3,j}\\
\end{bmatrix}.
\end{equation*}
It follows that
\begin{equation}\label{EQ:MyEQ:002}
\begin{split}
S_T(u_b,v_b)=&h^{-1}\sum_{i=1}^N(\S(u_b)(M_i)-u_{b,i})(\S(v_b)(M_i)-v_{b,i})|e_i|\\
            =&h^{-1}\sum_{i=1}^N (u_{b,i}-\S(u_b)(M_i))v_{b,i}|e_i|\\
            =&h^{-1}\left((I_N-M(M^TEM)^{-1}M^TE)
\begin{bmatrix}
u_{b,1} \\
u_{b,2} \\
\vdots  \\
u_{b,N} \\
\end{bmatrix}\right)_j |e_j| \\
 =&h^{-1}\sum_{i=1}^N a_{j,i}u_{b,i},
 \end{split}
\end{equation}
where $I_N$ is the identity matrix of size $N\times N$.

\subsection{The stiffness matrix for $a_T(\cdot,\cdot)$}
For a computation of the element stiffness matrix corresponding to the bilinear form $a_T(u_b,v_b)=(\alpha\nabla_w u_b,\nabla_w v_b )_T$, we have from the weak gradient formula \eqref{DefWGpoly} that
\begin{eqnarray*}
(\alpha\nabla_w u_b,\nabla_w v_b )_T
&=&(\alpha\displaystyle\frac{1}{|T|}\sum_{j=1}^N u_{b,j}\bm{n}_j|e_j|,\displaystyle\frac{1}{|T|}\sum_{i=1}^N v_{b,i}\bm{n}_i|e_i|)_{T}\\
&=&\sum_{i,j=1}^N(\alpha\displaystyle\frac{1}{|T|} u_{b,j}\bm{n}_j|e_j|,\displaystyle\frac{1}{|T|} v_{b,i}\bm{n}_i|e_i|)_{T} \nonumber\\
&=&\sum_{i,j=1}^N \frac{|e_j||e_i|}{|T|^2}(\alpha\bm{n}_j,\bm{n}_i)_{T}u_{b,j}v_{b,i},\nonumber \\
&=&\sum_{i,j=1}^N b_{i,j}u_{b,j}v_{b,i},\nonumber
\end{eqnarray*}
which leads to the block matrix $B$ in the element stiffness matrix.

\subsection{The stiffness matrix for $b_T(\cdot,\cdot)$}
Recall that the bilinear form $b_T(\cdot,\cdot)$ is given by
$$
b_T(u_b, v_b)=(\bbeta\cdot\nabla_w u_b, \S(v_b))_T.
$$
Note that the extension $\S(v_b)$ has the following representation:
\[
\S(v_b)=\gamma_0 + \gamma_1 (x-x_T) + \gamma_2 (y-y_T),
\]
where
\begin{equation}\label{EQ:MyEq.001-vb}
\begin{bmatrix}
\gamma_0  \\
\gamma_1  \\
\gamma_2  \\
\end{bmatrix}
=(M^TEM)^{-1}M^TE
\begin{bmatrix}
v_{b,1} \\
v_{b,2} \\
\vdots  \\
v_{b,N} \\
\end{bmatrix}.
\end{equation}
Thus, with $D=(M^TEM)^{-1}M^TE$, we have from the weak gradient formula \eqref{DefWGpoly} that
\begin{equation}\label{EQ:707}
\begin{split}
 & (\bbeta\cdot\nabla_w u_b,\S(v_b))_T \\
=&\displaystyle\frac{1}{|T|}\sum_{i,j=1}^N (\bbeta\cdot\bm{n}_j, d_{1,i} + d_{2,i} (x-x_T) + d_{3,i} (y-y_T))_T |e_j| u_{b,j} v_{b,i}\\
=&\displaystyle\frac{1}{|T|}\sum_{i,j=1}^N \int_T \bbeta\cdot\bm{n}_j (d_{1,i} + d_{2,i} (x-x_T) + d_{3,i} (y-y_T))dT |e_j| u_{b,j} v_{b,i}.
\end{split}
\end{equation}
For simplicity, we introduce the following functions:
\begin{equation}\label{EQ:zetai}
\zeta_i(x,y) = d_{1,i} + d_{2,i} (x-x_T) + d_{3,i} (y-y_T),\qquad i=1,\ldots, N.
\end{equation}
Then, the equation \eqref{EQ:707} indicates that the element stiffness matrix corresponding to the bilinear form $b_T(\cdot,\cdot)$ is given by
$$
R=\{r_{ij}\}_{N\times N},\ \ r_{ij}= \frac{|e_j|}{|T|}\int_T \bbeta\cdot\bm{n}_j \zeta_i dT.
$$

\subsection{The stiffness matrix for $c_T(\cdot,\cdot)$}
Recall that the bilinear form $c_T(\cdot,\cdot)$ is given by
$$
c_T(u_b, v_b)=(c \S(u_b), \S(v_b))_T.
$$
Thus, the element stiffness matrix corresponding to $c_T(\cdot,\cdot)$ has the following formula:
$$
C=\{c_{ij}\}_{N\times N},\quad c_{ij} = \int_T c(x,y) \zeta_j\zeta_i dT,
$$
where $\zeta_i$ is the function defined in \eqref{EQ:zetai}.

\subsection{The element load vector}

Finally, the element load vector can be obtained from
\begin{eqnarray*}
(f, \S(v_b))_T
& =& \int_{T}f \S(v_b)dT \\
& =&  \int_{T}f(x,y) (d_{1,i} + d_{2,i}(x-x_T) + d_{3,i}(y-y_T)) dT\\
& =&  \int_{T}f(x,y) \zeta_i(x,y) dT
\end{eqnarray*}
for $i=1,\ldots, N$.

\section{Stability and Well-Posedness}\label{Section:SWP}
The SWG scheme (\ref{equation.SWG}) can be derived from the classical weak Galerkin finite element method \cite{WangYe_2013,mwy,wy3655} by eliminating the degrees of freedom associated with the interior of each element when $\bbeta=0$ and $c=0$. But for the general case of $\bbeta$ and $c$, the SWG finite element method (\ref{equation.SWG}) is different from the weak Galerkin schemes in existing literature. It is thus necessary to provide a mathematical theory for the stability and well-posedness of the numerical scheme (\ref{equation.SWG}).

\begin{lemma}\label{Lemma:lemma1}
Let $\T_h$ be a shape-regular polygonal partition of the domain $\Omega$. There exists a constant $C$ such that
\begin{eqnarray}\label{EQ:815:101}
\|\nabla \S(v_b)\|_{T}^2 & \leq & C\left(\|\nabla_w v_b\|^2_T + h^{-1}\| v_b-Q_b\S(v_b)\|^2_{\pT}\right),\\
\|v_b-\S(v_b)\|_{0,\pT}^2
&\leq & C h \left(\|\nabla_w v_b\|^2_T + h^{-1} \|v_b-Q_b\S(v_b)\|^2_\pT\right).\label{EQ:815:102}
\end{eqnarray}
Moreover, the following Poincar\'e-type estimate holds true:
\begin{eqnarray}\label{EQ:815:901}
\|\S(v_b)\|^2 & \leq & C\left(\|\nabla_w v_b\|^2_T + h^{-1}\| v_b-Q_b\S(v_b)\|^2_{\pT}\right).
\end{eqnarray}
\end{lemma}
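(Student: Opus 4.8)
The plan is to work on a single element $T$ and reduce everything to scaling/inverse-estimate arguments on the finite-dimensional spaces $W(T)$ and $P_1(T)$, using the least-squares characterization \eqref{Def.extension} of the extension operator $\S$ and the weak-gradient identity \eqref{DefWGpoly-new}. Throughout, $C$ will denote a generic constant depending only on the shape-regularity of $\T_h$.

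For \eqref{EQ:815:101}: since $\S(v_b)\in P_1(T)$, its gradient $\nabla\S(v_b)$ is a constant vector on $T$, so I would aim to control it by quantities living on $\pT$. The key identity is that, testing \eqref{DefWGpoly-new} with $\bm\phi=\nabla\S(v_b)$ and comparing with $\langle Q_b\S(v_b),\bm\phi\cdot\bn\rangle_\pT=\langle \S(v_b),\bm\phi\cdot\bn\rangle_\pT$ (the projection $Q_b$ is exact against the constant $\bm\phi\cdot\bn$ on each edge up to the edgewise average; more precisely one uses that $\S(v_b)$ is linear and $Q_b$ preserves edge averages), one gets
\[
\|\nabla\S(v_b)\|_T^2 \;=\; \big(\nabla\S(v_b),\nabla_w v_b\big)_T \;+\; \langle Q_b\S(v_b)-v_b,\ \nabla\S(v_b)\cdot\bn\rangle_\pT .
\]
The first term is bounded by $\|\nabla\S(v_b)\|_T\|\nabla_w v_b\|_T$ by Cauchy–Schwarz; for the second I would use Cauchy–Schwarz on $\pT$ together with the trace/inverse inequality $\|\nabla\S(v_b)\cdot\bn\|_\pT \le C h^{-1/2}\|\nabla\S(v_b)\|_T$ (valid on a shape-regular polygon for the constant vector $\nabla\S(v_b)$), which produces $C h^{-1/2}\|\nabla\S(v_b)\|_T\,\|Q_b\S(v_b)-v_b\|_\pT$. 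Dividing through by $\|\nabla\S(v_b)\|_T$ and squaring gives \eqref{EQ:815:101}. The main technical point here is justifying the identity above: one must check that $\S(v_b)$ being $P_1$ and the definition \eqref{Def.extension} make $\langle \S(v_b),\bm\phi\cdot\bn\rangle_\pT$ equal to the discrete sum $\sum_i \S(v_b)(M_i)(\bm\phi\cdot\bn_i)|e_i|$ — this is exact because the midpoint rule integrates linear functions exactly on each edge — and likewise that $\langle v_b,\bm\phi\cdot\bn\rangle_\pT=\sum_i v_{b,i}(\bm\phi\cdot\bn_i)|e_i|$ since $v_b$ is edgewise constant; then \eqref{Def.extension} with $\phi=\bm\phi\cdot$(affine) is what connects the two.

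For \eqref{EQ:815:102}: write $v_b-\S(v_b)=(v_b-Q_b\S(v_b))+(Q_b\S(v_b)-\S(v_b))$ on $\pT$. The first piece is exactly the stabilizer quantity, contributing $\|v_b-Q_b\S(v_b)\|_\pT^2 = h\cdot h^{-1}\|v_b-Q_b\S(v_b)\|_\pT^2$. For the second piece, $Q_b\S(v_b)-\S(v_b)$ is the edgewise deviation of the linear function $\S(v_b)$ from its edge average; a standard approximation/scaling estimate on each edge gives $\|Q_b\S(v_b)-\S(v_b)\|_e \le C|e|\,\|\nabla\S(v_b)\|_{L^2(e)}$ and then, via the trace inequality again, $\|Q_b\S(v_b)-\S(v_b)\|_\pT^2 \le C h^2\,h^{-1}\|\nabla\S(v_b)\|_T^2 = Ch\|\nabla\S(v_b)\|_T^2$. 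Inserting the bound \eqref{EQ:815:101} just proved finishes \eqref{EQ:815:102}.

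For the Poincaré-type estimate \eqref{EQ:815:901}: this is the step I expect to be the genuine obstacle, because it is a global statement — a bound on $\|\S(v_b)\|$ over all of $\Omega$ by the right-hand side summed over elements — and cannot be obtained element-by-element (a constant function has zero weak gradient and zero stabilizer locally). The plan is to invoke a discrete Poincaré inequality for weak Galerkin functions with vanishing boundary value: one shows $\|\S(v_b)\|_\Omega^2 \le C\big(\sum_T \|\nabla\S(v_b)\|_T^2 + \sum_{e\in\mathcal{E}_h^{int}}\|[\![\S(v_b)]\!]\|_e^2 h^{-1}\big)$ by the usual argument (either via a duality/Aubin–Nitsche argument against $-\Delta w = \S(v_b)$ with $w\in H^1_0$, integrating by parts elementwise and using that $\S(v_b)$ is piecewise $H^1$, or by comparison with a conforming interpolant). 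The jump term on an interior edge $e$ shared by $T,T'$ is then controlled by $\|[\![\S(v_b)]\!]\|_e \le \|\S(v_b)|_T - Q_b\S(v_b)|_e\| + \|\S(v_b)|_{T'}-Q_b\S(v_b)|_e\| + \|v_b - Q_b\S(v_b)|_{e,T}\| + \|v_b-Q_b\S(v_b)|_{e,T'}\|$ — using that $v_b$ is single-valued across $e$ — and each term is handled exactly as in the proof of \eqref{EQ:815:102} (the first two by $Ch^{1/2}\|\nabla\S(v_b)\|$, the last two by the stabilizer). Boundary edges are even easier since $v_b=0$ there and $Q_b\S(v_b)$ is controlled the same way. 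Summing over edges and applying \eqref{EQ:815:101} elementwise yields \eqref{EQ:815:901}. The care needed is in correctly quoting/stating the discrete Poincaré inequality in a form applicable to the piecewise-$P_1$ function $\S(v_b)$ together with its "numerical trace" $v_b$, and in making sure the boundary condition $v_b\in W_h^0(\T_h)$ is actually used to kill the rigid-body (constant) mode.
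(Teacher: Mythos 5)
Your proof follows essentially the same route as the paper's: the identity $(\nabla \S(v_b),\bm{\phi})_T = (\nabla_w v_b,\bm{\phi})_T - \langle v_b - Q_b\S(v_b),\bm{\phi}\cdot\bn\rangle_\pT$ tested with $\bm{\phi}=\nabla\S(v_b)$ for \eqref{EQ:815:101}, the triangle inequality through $Q_b\S(v_b)$ plus the $L^2$-projection and trace estimates for \eqref{EQ:815:102}, and a discrete Poincar\'e inequality combined with the first two bounds for \eqref{EQ:815:901}. Your remark that \eqref{EQ:815:901} is necessarily a global statement (summed over $T$, and using $v_b\in W_h^0(\T_h)$ to eliminate the constant mode) is correct and in fact tightens a notational slip in the lemma as stated, whose right-hand side should be read as summed over the partition.
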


\begin{proof}
From the formula \eqref{DefWGpoly-new} for the weak gradient, we have for any constant vector $\bm{\phi}$ that
\begin{equation*}
\begin{split}
(\nabla_w v_b, \bm{\phi})_T
=&\langle v_b, \bm{\phi}\cdot\bm{n}\rangle_\pT\\
=& \langle v_b- \S(v_b), \bm{\phi}\cdot\bm{n}\rangle_\pT +  \langle \S(v_b), \bm{\phi}\cdot\bm{n}\rangle_\pT \\
=& \langle v_b- Q_b\S(v_b), \bm{\phi}\cdot\bm{n}\rangle_\pT + (\nabla \S(v_b), \bm{\phi})_T,
\end{split}
\end{equation*}
which gives
$$
(\nabla \S(v_b), \bm{\phi})_T = (\nabla_w v_b, \bm{\phi})_T - \langle v_b- Q_b\S(v_b), \bm{\phi}\cdot\bm{n}\rangle_\pT.
$$
Hence, by letting $\bm{\phi} = \nabla \S(v_b)$ we arrive at
\begin{eqnarray*}
\|\nabla \S(v_b)\|_{T}^2 \leq  C\left(\|\nabla_w v_b\|^2_T + h^{-1}\| v_b-Q_b\S(v_b)\|^2_{\pT}\right),
\end{eqnarray*}
which verifies \eqref{EQ:815:101}.

Next, from the usual error estimate for the $L^2$ projection operator $Q_b$ and the estimate \eqref{EQ:815:101}, we have
\begin{equation*}
\begin{split}
\|\S(v_b) - Q_b\S(v_b)\|^2_{\pT} \leq & C h^2 \|\nabla \S(v_b)\|_\pT^2 \\
\le & C h \|\nabla \S(v_b)\|^2_T \\
\leq & C\left(h\|\nabla_w v_b\|^2_T + \|v_b-Q_b\S(v_b)\|^2_\pT\right).
\end{split}
\end{equation*}
It follows that
\begin{equation}\label{EQ:815:100}
\begin{split}
\|v_b-\S(v_b)\|_{0,\pT}
\leq & \|v_b-Q_b\S(v_b)\|_{0,\pT} + \|\S(v_b)-Q_b\S(v_b)\|_{0,\pT}\\
\leq & C \left(h\|\nabla_w v_b\|^2_T + \|v_b-Q_b\S(v_b)\|^2_\pT\right)^{1/2},
\end{split}
\end{equation}
which verifies the estimate \eqref{EQ:815:102}.

To derive the inequality \eqref{EQ:815:901}, we note the following discrete Poincar\'e inequality:
$$
\|\S(v_b)\|^2 \leq C\sum_{T\in \T_h} \left( \|\nabla \S(v_b)\|_T^2 + h_T^{-1}\|\S(v_b) - v_b\|_\pT^2\right).
$$
Combining the above estimate with \eqref{EQ:815:101} and \eqref{EQ:815:101} gives rise to the desired inequality \eqref{EQ:815:901}. This completes the proof of the lemma.
\end{proof}

\begin{lemma}\label{Lemma:lemma2}
On each element $T\in\T_h$, the following identity holds true:
\begin{equation}\label{EQ:816:001}
\begin{split}
  b_T(v_b,v_b) =&
  \frac12 \langle v_b, v_b \bbeta\cdot\bn\rangle_\pT - \frac12 ((\nabla\cdot\bbeta) \S(v_b), \S(v_b))_T \\
  & - \frac12 \langle v_b-\S(v_b), (v_b-{\S(v_b))\bbeta}\cdot\bn\rangle_\pT \\
  & + \langle v_b-\S(v_b), \overline{\S(v_b)\bbeta}\cdot\bn - {\S(v_b)\bbeta}\cdot\bn\rangle_\pT,
\end{split}
\end{equation}
where $\overline{\S(v_b)\bbeta}$ is the average of $\S(v_b)\bbeta$ on the element $T$.
\end{lemma}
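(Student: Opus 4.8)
The plan is to start from the definition of $b_T(v_b,v_b)=(\bbeta\cdot\nabla_w v_b,\S(v_b))_T$ and convert the weak gradient back into a boundary term via the characterization \eqref{DefWGpoly-new}. First I would note that $\nabla_w v_b$ is a constant vector on $T$, so I can take $\bm\phi=\nabla_w v_b$ in \eqref{DefWGpoly-new}; but what I actually want is to pair $\nabla_w v_b$ against the non-constant function $\S(v_b)\bbeta$. The key device is to write $(\bbeta\cdot\nabla_w v_b,\S(v_b))_T=(\nabla_w v_b,\S(v_b)\bbeta)_T$ and split $\S(v_b)\bbeta=\overline{\S(v_b)\bbeta}+(\S(v_b)\bbeta-\overline{\S(v_b)\bbeta})$ into its $T$-average plus a mean-zero remainder. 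Since $\nabla_w v_b$ is constant, the remainder integrates against it to zero over $T$; applying \eqref{DefWGpoly-new} with the constant vector $\bm\phi=\overline{\S(v_b)\bbeta}$ then gives $(\nabla_w v_b,\S(v_b)\bbeta)_T=\langle v_b,\overline{\S(v_b)\bbeta}\cdot\bn\rangle_\pT$. This is the first reduction: $b_T(v_b,v_b)=\langle v_b,\overline{\S(v_b)\bbeta}\cdot\bn\rangle_\pT$.

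Next I would introduce $\S(v_b)$ on the boundary by writing $v_b=\S(v_b)+(v_b-\S(v_b))$ in the first slot, so that
\[
b_T(v_b,v_b)=\langle \S(v_b),\overline{\S(v_b)\bbeta}\cdot\bn\rangle_\pT+\langle v_b-\S(v_b),\overline{\S(v_b)\bbeta}\cdot\bn\rangle_\pT.
\]
In the first term I would replace $\overline{\S(v_b)\bbeta}$ by $\S(v_b)\bbeta$ at the cost of another mean-zero correction; but since $\S(v_b)$ is \emph{not} constant on $\pT$ this substitution is not free, so instead I would directly handle $\langle \S(v_b),\S(v_b)\bbeta\cdot\bn\rangle_\pT$ by the divergence theorem: $\langle \S(v_b)^2,\bbeta\cdot\bn\rangle_\pT=\int_T\nabla\cdot(\S(v_b)^2\bbeta)\,dT=2(\S(v_b)\bbeta\cdot\nabla\S(v_b))_T+((\nabla\cdot\bbeta)\S(v_b)^2)_T$. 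Then I would recognize $(\S(v_b)\bbeta\cdot\nabla\S(v_b))_T=(\bbeta\cdot\nabla\S(v_b),\S(v_b))_T$ and, since $\nabla\S(v_b)$ is the ordinary (constant) gradient of the linear function $\S(v_b)$, relate it to $b_T$ through the identity $\nabla_w v_b-\nabla\S(v_b)$ being controlled by the boundary jump $v_b-Q_b\S(v_b)$ exactly as in the proof of Lemma \ref{Lemma:lemma1}. Carrying this through and solving for $b_T(v_b,v_b)$ produces the stated decomposition, with the $\tfrac12\langle v_b,v_b\bbeta\cdot\bn\rangle_\pT$ term coming from $\tfrac12\langle\S(v_b)^2,\bbeta\cdot\bn\rangle_\pT$ after reinserting $v_b$ for $\S(v_b)$ on the boundary and collecting the two symmetric jump contributions into the $-\tfrac12\langle v_b-\S(v_b),(v_b-\S(v_b))\bbeta\cdot\bn\rangle_\pT$ term; the last term $\langle v_b-\S(v_b),\overline{\S(v_b)\bbeta}\cdot\bn-\S(v_b)\bbeta\cdot\bn\rangle_\pT$ is precisely the leftover from comparing $\overline{\S(v_b)\bbeta}$ against $\S(v_b)\bbeta$ in the second term above.

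The bookkeeping I expect to be the main obstacle: there are several places where one chooses whether to carry the average $\overline{\S(v_b)\bbeta}$ or the pointwise $\S(v_b)\bbeta$, and whether to express boundary integrands in terms of $v_b$ or $\S(v_b)$, and the four terms on the right-hand side are exactly the residues of these choices. The cleanest route is probably to \emph{postulate} the right-hand side of \eqref{EQ:816:001} and verify the identity by expanding every term back to $\langle v_b,\overline{\S(v_b)\bbeta}\cdot\bn\rangle_\pT$ using only the divergence theorem on $T$ (legitimate since $\S(v_b)\in P_1(T)$ and $\bbeta\in[W^{1,\infty}]^d$) and the mean-zero property of $\S(v_b)\bbeta-\overline{\S(v_b)\bbeta}$ against the constant vector $\nabla_w v_b$. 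No regularity beyond what is already assumed on $\bbeta$ is needed, and the identity is purely algebraic once these two tools are in hand.
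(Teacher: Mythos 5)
Your proposal follows essentially the same route as the paper: reduce $b_T(v_b,v_b)$ to $\langle v_b,\overline{\S(v_b)\bbeta}\cdot\bn\rangle_\pT$ using that $\nabla_w v_b$ is constant, split off $v_b-\S(v_b)$, generate the $\tfrac12$ and the $\nabla\cdot\bbeta$ term from the divergence theorem applied to $\S(v_b)^2\bbeta$, and finish with the algebraic identity $v_b\S(v_b)-\tfrac12\S(v_b)^2=\tfrac12 v_b^2-\tfrac12(v_b-\S(v_b))^2$. One step in your middle paragraph is a wrong turn, though: you do not need (and should not use) the Lemma \ref{Lemma:lemma1}-style comparison of $\nabla_w v_b$ with $\nabla\S(v_b)$ via the jump $v_b-Q_b\S(v_b)$ — carried out literally it would inject spurious boundary terms absent from \eqref{EQ:816:001}. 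The correct link is simply $\langle\S(v_b),\overline{\S(v_b)\bbeta}\cdot\bn\rangle_\pT=(\nabla\S(v_b),\overline{\S(v_b)\bbeta})_T=(\nabla\S(v_b),\S(v_b)\bbeta)_T$, where the first equality is the divergence theorem with a constant vector and the second holds because $\nabla\S(v_b)$ is constant so the mean-zero part of $\S(v_b)\bbeta$ drops out; your fallback strategy of verifying the postulated identity by the divergence theorem and the mean-zero property is sound and amounts to the same computation.
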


\begin{proof} From the formula \eqref{DefWGpoly}, we have
\begin{equation}\label{EQ:815:211}
\begin{split}
  b_T(v_b,v_b) =& (\bbeta\cdot\nabla_w v_b, \S(v_b))_T \\
  =& (\nabla_w v_b, {\S(v_b)\bbeta})_T\\
  =& (\nabla_w v_b, \overline{\S(v_b)\bbeta})_T\\
  =& \langle v_b, \overline{\S(v_b)\bbeta}\cdot\bn\rangle_\pT\\
  =& \langle v_b-\S(v_b), \overline{\S(v_b)\bbeta}\cdot\bn\rangle_\pT
  + \langle \S(v_b), \overline{\S(v_b)\bbeta}\cdot\bn\rangle_\pT.
\end{split}
\end{equation}
Note that
\begin{equation*}
  \begin{split}
  \langle \S(v_b), \overline{\S(v_b)\bbeta}\cdot\bn\rangle_\pT &= (\nabla\S(v_b), \overline{\S(v_b)\bbeta})_T\\
  &= (\nabla\S(v_b), \S(v_b)\bbeta)_T\\
  &= \frac12 \langle \S(v_b), \S(v_b) \bbeta\cdot\bn\rangle_\pT - \frac12 ((\nabla\cdot\bbeta) \S(v_b), \S(v_b))_T.
  \end{split}
\end{equation*}
Substituting the above identity into \eqref{EQ:815:211} yields
\begin{equation}\label{EQ:815:212}
\begin{split}
  b_T(v_b,v_b) =& \langle v_b-\S(v_b), \overline{\S(v_b)\bbeta}\cdot\bn\rangle_\pT
  + \frac12 \langle \S(v_b), \S(v_b) \bbeta\cdot\bn\rangle_\pT \\
  & \ \ - \frac12 ((\nabla\cdot\bbeta) \S(v_b), \S(v_b))_T\\
  =& \langle v_b-\S(v_b), \overline{\S(v_b)\bbeta}\cdot\bn - {\S(v_b)\bbeta}\cdot\bn\rangle_\pT + \langle v_b,{\S(v_b)\bbeta}\cdot\bn\rangle_\pT \\
  & \ \ - \frac12 \langle \S(v_b), \S(v_b) \bbeta\cdot\bn\rangle_\pT - \frac12 ((\nabla\cdot\bbeta) \S(v_b), \S(v_b))_T\\
  =& \langle v_b-\S(v_b), \overline{\S(v_b)\bbeta}\cdot\bn - {\S(v_b)\bbeta}\cdot\bn\rangle_\pT \\
  & \ - \frac12 \langle v_b-\S(v_b), (v_b-{\S(v_b))\bbeta}\cdot\bn\rangle_\pT \\
  & \ + \frac12 \langle v_b, v_b \bbeta\cdot\bn\rangle_\pT - \frac12 ((\nabla\cdot\bbeta) \S(v_b), \S(v_b))_T,
\end{split}
\end{equation}
which leads to the identify \eqref{EQ:816:001}.
\end{proof}

\medskip
In the finite element space $W_h(\T_h)$, we introduce the following semi-norm:
\begin{equation}\label{EQ:815:105}
\3bar v_b\3bar^2: = \sum_{T\in\T_h} \left( \kappa S_T(v_b, v_b) + a_T(v_b, v_b) \right)
\end{equation}
We claim that $\3bar\cdot\3bar$ defines a norm in the closed subspace $W_h^0(\T_h)$. It suffices to show that $v_b \equiv 0$ for any $v_b\in W_h^0(\T_h)$ satisfying $\3bar v_b \3bar =0$. In fact, if $\3bar v_b \3bar =0$, then from \eqref{EQ:815:105} we have
\[
\kappa \sum_{T}S_T(v_b,v_b)+\sum_{T}(\alpha \nabla_w v_b,\nabla_w v_b)_T =0.
\]
It follows that on each element $T\in\T_h$
\begin{equation}\label{EQ:815-110}
\nabla_w v_b=0,\quad (v_b - \S(v_b))(M_i)=0
\end{equation}
for $i=1,\ldots, N$. Thus,
\[
\displaystyle \nabla \S(v_b)=\frac{1}{|T|}\sum_{i=1}^N \S(v_b)(M_i)|e_i|\bm{n}_i=\frac{1}{|T|}\sum_{i=1}^N v_{b,i}|e_i|\bm{n}_i=\nabla_w v_b=0,
\]
so that $\S(v_b)$ has constant value on each element $T\in \T_h$. By using \eqref{EQ:815-110} we see that $v_b=\S(v_b)=const$ on each edge, which, together with the fact that $v_b=0$ on $\partial \Omega$, leads to $v_b\equiv 0$ in $\Omega$.

\medskip

\begin{lemma}\label{Lemma:lemma4}
For the model problem \eqref{ellipticbdy}, assume that $\bbeta\in W^{1,\infty}(\Omega)$ and the condition \eqref{EQ:positive} is satisfied. Then, the bilinear form $\kappa S(\cdot,\cdot)+\B(\cdot,\cdot)$ is bounded and coercive in the finite element space $W_h^0(\T_h)$; i.e., there exist constants $M$ and $\Lambda >0$ such that
\begin{eqnarray}\label{EQ:815:200-00}
|\kappa S(v_b,w_b)+\B(v_b, w_b)| & \le & M \3bar v_b\3bar  \3bar w_b\3bar \qquad \forall v_b, w_b \in W_h^0(\T_h),\\
\label{EQ:815:200-0}
\kappa S(v_b,v_b)+\B(v_b, v_b) & \ge & \Lambda \3bar v_b\3bar^2\qquad \forall v_b\in W_h^0(\T_h),
\end{eqnarray}
provided that the meshsize $h$ of $\T_h$ is sufficiently small.
\end{lemma}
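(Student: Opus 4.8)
The plan is to establish boundedness and coercivity separately, the coercivity estimate being the substantive part. For boundedness, I would estimate the four bilinear forms making up $\kappa S(\cdot,\cdot)+\B(\cdot,\cdot)$ one at a time. Writing $\3bar v_b\3bar_T^2:=\kappa S_T(v_b,v_b)+a_T(v_b,v_b)$, the bound $\alpha\ge\alpha_0$ gives $\|\nabla_w v_b\|_T^2\le\alpha_0^{-1}a_T(v_b,v_b)$ and, by definition of the stabilizer, $h^{-1}\|v_b-Q_b\S(v_b)\|_\pT^2=S_T(v_b,v_b)$, so a plain Cauchy--Schwarz handles $a_T$ and $\kappa S_T$. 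For $b_T$ and $c_T$ I would use $\bbeta,c\in L^\infty$ together with the Poincar\'e-type estimate \eqref{EQ:815:901} of Lemma \ref{Lemma:lemma1} to control $\|\S(\cdot)\|$ by $\3bar\cdot\3bar$; since \eqref{EQ:815:901} is a \emph{global} estimate, the bounds on $\|\S(\cdot)\|$ must be kept global and the product with the remaining factor split by a single Cauchy--Schwarz over $T\in\T_h$. Summing then yields \eqref{EQ:815:200-00} with $M$ depending on $\alpha_0$, $\kappa$, $\|\alpha\|_\infty$, $\|\bbeta\|_\infty$, $\|c\|_\infty$ and the constant of Lemma \ref{Lemma:lemma1}.

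For coercivity, I would start from the exact identity (using the very definition \eqref{EQ:815:105} of the seminorm)
\[
\kappa S(v_b,v_b)+\B(v_b,v_b)=\3bar v_b\3bar^2+\sum_{T\in\T_h}b_T(v_b,v_b)+\sum_{T\in\T_h}c_T(v_b,v_b),
\]
and then substitute the integration-by-parts identity \eqref{EQ:816:001} of Lemma \ref{Lemma:lemma2} into each $b_T(v_b,v_b)$. Summing the leading term $\tfrac12\langle v_b,v_b\bbeta\cdot\bn\rangle_\pT$ over all elements, the contributions of an interior edge carry opposite outward normals and the same single-valued $v_b^2$, hence cancel, while boundary edges contribute nothing because $v_b\in W_h^0(\T_h)$; so this sum vanishes. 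The divergence term $-\tfrac12((\nabla\cdot\bbeta)\S(v_b),\S(v_b))_T$ combines with $c_T(v_b,v_b)=(c\,\S(v_b),\S(v_b))_T$ to give $\bigl((c-\tfrac12\nabla\cdot\bbeta)\S(v_b),\S(v_b)\bigr)\ge0$ by assumption \eqref{EQ:positive}. This is where the structural hypothesis on the data is used decisively.

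It then remains to absorb the two residual terms in \eqref{EQ:816:001}, namely $-\tfrac12\langle v_b-\S(v_b),(v_b-\S(v_b))\bbeta\cdot\bn\rangle_\pT$ and $\langle v_b-\S(v_b),\overline{\S(v_b)\bbeta}\cdot\bn-\S(v_b)\bbeta\cdot\bn\rangle_\pT$, showing both are $O(h)\3bar v_b\3bar^2$. For the first, $|\bbeta|\le\|\bbeta\|_\infty$ and the trace estimate \eqref{EQ:815:102} give $\sum_T\|v_b-\S(v_b)\|_{0,\pT}^2\le Ch\3bar v_b\3bar^2$. For the second, a trace-plus-Poincar\'e bound on each $T$ yields $\|\overline{\S(v_b)\bbeta}-\S(v_b)\bbeta\|_{0,\pT}^2\le Ch\|\nabla(\S(v_b)\bbeta)\|_T^2$; since $\nabla(\S(v_b)\bbeta)=\nabla\S(v_b)\otimes\bbeta+\S(v_b)\nabla\bbeta$ with $\bbeta\in[W^{1,\infty}(\Omega)]^2$, \eqref{EQ:815:101} and \eqref{EQ:815:901} give $\sum_T\|\nabla(\S(v_b)\bbeta)\|_T^2\le C\|\bbeta\|_{1,\infty}^2\3bar v_b\3bar^2$, and one more Cauchy--Schwarz over $T$ bounds this term by $Ch\3bar v_b\3bar^2$ as well. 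Putting the pieces together,
\[
\kappa S(v_b,v_b)+\B(v_b,v_b)\ \ge\ (1-Ch)\3bar v_b\3bar^2\ \ge\ \tfrac12\3bar v_b\3bar^2
\]
as soon as $h\le(2C)^{-1}$, which gives \eqref{EQ:815:200-0} with $\Lambda=\tfrac12$.

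The main obstacle is the convection term $\sum_T b_T(v_b,v_b)$, which is not sign-definite; the mechanism that rescues coercivity is precisely the identity of Lemma \ref{Lemma:lemma2}, exploited through (i) the interior-edge cancellation available only for $v_b\in W_h^0(\T_h)$, and (ii) the recognition that the mismatch between $v_b$ and its linear extension $\S(v_b)$ on $\pT$ is higher order in $h$. A secondary technical care point, already flagged above, is that the Poincar\'e inequality \eqref{EQ:815:901} is global, so all element-wise Cauchy--Schwarz splittings must be deferred until after $\|\S(v_b)\|$ has been controlled by $\3bar v_b\3bar$.
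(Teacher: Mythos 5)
Your proposal is correct and follows essentially the same route as the paper: boundedness via Cauchy--Schwarz together with the Poincar\'e-type estimate \eqref{EQ:815:901}, and coercivity by substituting the identity of Lemma \ref{Lemma:lemma2} into $\sum_T b_T(v_b,v_b)$, cancelling the $\tfrac12\langle v_b, v_b\bbeta\cdot\bn\rangle_\pT$ contributions across interior edges, absorbing the divergence term into $c_T$ via assumption \eqref{EQ:positive}, and bounding the two residual terms by $Ch\3bar v_b\3bar^2$ using \eqref{EQ:815:101}, \eqref{EQ:815:102}, and \eqref{EQ:815:901}. The only cosmetic difference is that you spell out the product-rule bound on $\nabla(\S(v_b)\bbeta)$ where the paper states the equivalent estimate $\|\overline{\S(v_b)\bbeta}-\S(v_b)\bbeta\|_\pT\le Ch^{1/2}(\|\S(v_b)\|_T+\|\nabla\S(v_b)\|_T)$ directly.
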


\begin{proof}
Recall that for any $v_b\in W_h^0(\T_h)$ we have
\begin{equation}\label{EQ:815:200}
\begin{split}
\B(v_b, w_b) & \ = \sum_{T\in\T_h} \left(a_T(v_b,w_b)
+ b_T(v_b, w_b) + c_T(v_b,w_b)\right),\\
S(v_b,w_b) & \ = \sum_{T\in\T_h} S_T(v_b,w_b).
\end{split}
\end{equation}
The boundedness estimate \eqref{EQ:815:200-00} is then straightforward from the usual Cauchy-Schwarz and the inequality \eqref{EQ:815:901}. We shall focus on the derivation of the coercivity inequality \eqref{EQ:815:200-0} in the rest of the proof.

In comparison with \eqref{EQ:815:105}, the key to the coercivity inequality \eqref{EQ:815:200-0} is to  derive an estimate of the following type:
\begin{equation}\label{EQ:815:201}
\sum_{T\in\T_h} \left(b_T(v_b, v_b) + c_T(v_b,v_b)\right)\ge \eta - \varepsilon(h) \3bar v_b\3bar^2,
\end{equation}
where $\eta\ge 0$ and $\varepsilon(h)$ is a parameter satisfying $\varepsilon(h) \to 0$ as $h\to 0$. If \eqref{EQ:815:201} indeed holds true, then we have from \eqref{EQ:815:200} that
\begin{equation}\label{EQ:815:250}
\begin{split}
\kappa S(v_b,v_b)+\B(v_b, v_b) \ge & \3bar v_b \3bar^2 + \eta - \varepsilon(h) \3bar v_b\3bar^2\\
\ge & (1- \varepsilon(h)) \3bar v_b\3bar^2,
\end{split}
\end{equation}
which implies the coercivity \eqref{EQ:815:200-0} for sufficiently small $h$.

It remains to derive the estimate \eqref{EQ:815:201}. To this end, we sum up the identify in Lemma \ref{Lemma:lemma2} to obtain
\begin{equation}\label{EQ:816:005}
\begin{split}
  \sum_{T\in\T_h} b_T(v_b,v_b)
  = & -\frac12 \sum_{T\in\T_h} ( \nabla\cdot\bbeta \S(v_b), \S(v_b))_T \\
  & - \frac12 \sum_{T\in\T_h}\langle v_b-\S(v_b), (v_b-{\S(v_b))\bbeta}\cdot\bn\rangle_\pT \\
  & + \sum_{T\in\T_h}\langle v_b-\S(v_b), \overline{\S(v_b)\bbeta}\cdot\bn - {\S(v_b)\bbeta}\cdot\bn\rangle_\pT,
\end{split}
\end{equation}
where we have used the fact that $\sum_{T\in\T_h} \langle v_b, v_b \bbeta\cdot\bn\rangle_\pT =0$. Thus,
\begin{equation}\label{EQ:816:006}
\begin{split}
  \sum_{T\in\T_h} (b_T(v_b,v_b) + c_T(v_b,v_b))
  = & \sum_{T\in\T_h} ( (c-\frac12 \nabla\cdot\bbeta) \S(v_b), \S(v_b))_T \\
  & - \frac12 \sum_{T\in\T_h}\langle v_b-\S(v_b), (v_b-{\S(v_b))\bbeta}\cdot\bn\rangle_\pT \\
  & + \sum_{T\in\T_h}\langle v_b-\S(v_b), \overline{\S(v_b)\bbeta}\cdot\bn - {\S(v_b)\bbeta}\cdot\bn\rangle_\pT.
\end{split}
\end{equation}
Next, from \eqref{EQ:815:102} we have
\begin{equation}\label{EQ:816:007}
\begin{split}
\left|\sum_{T\in\T_h}\langle v_b-\S(v_b), (v_b-{\S(v_b))\bbeta}\cdot\bn\rangle_\pT\right| & \leq C \left( h\|\nabla_w v_b\|^2_T + \|v_b - Q_b \S(v_b)\|_\pT^2\right) \\
& \leq C h \sum_{T\in\T_h} \left( a_T(v_b,v_b) + S_T(v_b,v_b)\right)\\
&\leq C h \3bar v_b\3bar^2.
\end{split}
\end{equation}
As to the last term in \eqref{EQ:816:006}, we have
\begin{equation}\label{EQ:816:008}
\begin{split}
& \left| \sum_{T\in\T_h}\langle v_b-\S(v_b), \overline{\S(v_b)\bbeta}\cdot\bn - {\S(v_b)\bbeta}\cdot\bn\rangle_\pT \right| \\
\leq &  \sum_{T\in\T_h} \|v_b-\S(v_b)\|_\pT \|\overline{\S(v_b)\bbeta} - {\S(v_b)\bbeta}\|_\pT\\
\leq &  C h^{\frac12} \sum_{T\in\T_h} \|v_b-\S(v_b)\|_\pT \left(\|\S(v_b)\|_T + \|\nabla \S(v_b)\|_T\right)\\
\leq &  C h \left(\sum_{T\in\T_h} h^{-1}\|v_b-\S(v_b)\|_\pT^2\right)^{\frac12} \left(\sum_{T\in\T_h} \left(\|\S(v_b)\|_T^2 + \|\nabla \S(v_b)\|_T^2\right)\right)^{\frac12}
\end{split}
\end{equation}
Combining the estimates \eqref{EQ:815:101}, \eqref{EQ:815:102}, and \eqref{EQ:815:901} with \eqref{EQ:816:008} yields
\begin{equation}\label{EQ:816:009}
\left| \sum_{T\in\T_h}\langle v_b-\S(v_b), \overline{\S(v_b)\bbeta}\cdot\bn - {\S(v_b)\bbeta}\cdot\bn\rangle_\pT \right| \leq C h \3bar v_b\3bar^2.
\end{equation}

Now by substituting \eqref{EQ:816:007} and \eqref{EQ:816:009} into \eqref{EQ:816:006} we obtain the inequality \eqref{EQ:815:201} with $\eta = ( (c-\frac12 \nabla\cdot\bbeta) \S(v_b), \S(v_b))\ge 0$ and $\varepsilon(h) = Ch$.
This completes the proof of the lemma.
\end{proof}

\medskip

The following is a direct application of Lemma \ref{Lemma:lemma4}.

\begin{theorem}\label{them.unique}
Under the assumptions of Lemma \ref{Lemma:lemma4}, there exists a small, but fixed number $h_0>0$, such that the numerical scheme (\ref{equation.SWG}) has one and only one solution $u_b\in W_h(\T_h)$ for sufficiently fine finite element partitions $\T_h$ satisfying $h\le h_0$.
\end{theorem}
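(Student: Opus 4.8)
The plan is to reduce Theorem \ref{them.unique} to Lemma \ref{Lemma:lemma4} by the standard finite-dimensional argument. Since the scheme \eqref{equation.SWG} is a square linear system in the finite-dimensional space $W_h(\T_h)$ (with the boundary condition $u_b = Q_b(g)$ on $\partial\Omega$ imposed as a constraint), existence and uniqueness are equivalent to uniqueness, which in turn is equivalent to showing that the homogeneous problem has only the trivial solution. So the first step is to lift the inhomogeneous boundary condition: let $\tilde u_b \in W_h(\T_h)$ be any function with $\tilde u_b = Q_b(g)$ on $\partial\Omega$, and write $u_b = \tilde u_b + w_b$ with $w_b \in W_h^0(\T_h)$ the unknown; the scheme becomes the problem of finding $w_b \in W_h^0(\T_h)$ with $\A(w_b, v_b) = (f,\S(v_b)) - \A(\tilde u_b, v_b)$ for all $v_b \in W_h^0(\T_h)$, which is a square linear system on $W_h^0(\T_h)$.

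Next I would invoke the coercivity estimate \eqref{EQ:815:200-0} from Lemma \ref{Lemma:lemma4}: for $h \le h_0$ with $h_0$ chosen so that $\varepsilon(h_0) < 1$ (equivalently $\Lambda > 0$ in \eqref{EQ:815:200-0}), we have $\A(v_b, v_b) = \kappa S(v_b,v_b) + \B(v_b,v_b) \ge \Lambda \3bar v_b \3bar^2$ for all $v_b \in W_h^0(\T_h)$. If $w_b \in W_h^0(\T_h)$ solves the homogeneous system $\A(w_b, v_b) = 0$ for all $v_b \in W_h^0(\T_h)$, then taking $v_b = w_b$ gives $\Lambda \3bar w_b \3bar^2 \le \A(w_b, w_b) = 0$, hence $\3bar w_b \3bar = 0$. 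Since $\3bar \cdot \3bar$ is a genuine norm on $W_h^0(\T_h)$ — this was established in the paragraph preceding Lemma \ref{Lemma:lemma4} — we conclude $w_b \equiv 0$. Therefore the homogeneous system has only the trivial solution, so the coefficient matrix of the square system is nonsingular, which yields existence and uniqueness of $w_b$, and hence of $u_b = \tilde u_b + w_b$, for the original inhomogeneous problem.

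There is essentially no hard step here; the entire content of the theorem was already carried in Lemma \ref{Lemma:lemma4}, and the argument above is the standard "square linear system + injectivity" reduction. The only point requiring a modicum of care is the bookkeeping around the boundary data: one must check that the linear form $v_b \mapsto (f,\S(v_b)) - \A(\tilde u_b, v_b)$ is well-defined on $W_h^0(\T_h)$ (it is, since $\S$ is a well-defined local operator by \eqref{Def.extension} and all bilinear forms are finite sums of integrals over elements) and that the choice of lifting $\tilde u_b$ is irrelevant to the conclusion (it is, by uniqueness of $w_b$). The value $h_0$ is precisely the threshold produced in the proof of Lemma \ref{Lemma:lemma4}, namely any $h_0$ with $C h_0 < 1$ for the constant $C$ appearing in $\varepsilon(h) = Ch$ there.
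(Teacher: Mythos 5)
Your proposal is correct and follows essentially the same route as the paper: reduce to the homogeneous problem on the finite-dimensional space, apply the coercivity \eqref{EQ:815:200-0} from Lemma \ref{Lemma:lemma4} with $v_b$ equal to the homogeneous solution, and invoke the fact that $\3bar\cdot\3bar$ is a norm on $W_h^0(\T_h)$ to conclude triviality. The only difference is that you spell out the lifting of the inhomogeneous boundary data explicitly, which the paper leaves implicit.
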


\begin{proof}
It suffices to show that the homogeneous problem has only the trivial solution.
To this end, let $u_b \in W_h^0(\T_h)$, be the solution of scheme (\ref{equation.SWG}) with homogeneous data $f=0$ and $g=0$. By taking $v_b = u_b$ in (\ref{equation.SWG}) we obtain
\[
\kappa S(u_b,u_b)+\B(u_b,u_b)=0,
\]
which, from the coercivity inequality \eqref{EQ:815:200-0}, gives $\Lambda \3bar u_b\3bar^2 \leq \kappa S(u_b,u_b) + \B(u_b,u_b)=0$, and hence $u_b\equiv 0$ for sufficiently small $h$.
\end{proof}

\section{Error Estimates in $H^1$}\label{sectionEE}
Let $u$ be the exact solution of the model problem \eqref{ellipticbdy}-\eqref{ellipticbc} and $u_b \in W_h^0(\T_h)$ be the numerical approximation arising from the SWG scheme \eqref{equation.SWG}.
Let $Q_b u$  be the $L^2$ projection of $u$ in the space $W_h^0(\T_h)$. The error function refers to the difference between the $L^2$ projection and the SWG approximation:
\begin{equation}\label{equation.error}
e_b :=  Q_b u - u_b,
\end{equation}
The goal of this section is to establish an estimate for the error function $e_b$ in a discrete Sobolev norm.

Let us first state an error equation which plays an important role in the convergence analysis of the SWG scheme.
\begin{lemma}\label{lemma.EE.1}
Assume that the coefficient $\alpha$ of the model problem \eqref{ellipticbdy}-\eqref{ellipticbc} has piecewise constant values with respect to the finite element partition $\T_h$. Then the following equation holds true
\begin{eqnarray}\label{equ.EE.1}
\kappa S(e_b, v_b) + \B(e_b,v_b)
= \ell_u(v_b) \quad \forall v_b \in W_h^0(\T_h),
\end{eqnarray}
where $\ell_u(\cdot)$ is a linear functional given by
\begin{equation}\label{equ.EE.1-1}
\begin{split}
 \ell_u(v_b):=&\sum_{T\in\T_h} \langle \alpha\frac{\partial u}{\partial \bm{n}}-\alpha Q_0(\nabla u)\cdot\bm{n},\S(v_b)-v_b\rangle_\pT+\kappa S(Q_bu,v_b)\\
 & + ((Q_0-I) \nabla u, \S(v_b)\bbeta) + (c(\S(Q_bu)-u), \S(v_b)),
 \end{split}
\end{equation}
where $Q_0(\nabla u)$ is the $L^2$ projection of $\nabla u$ in the space $[P_0(\T_h)]^2$, and $\bm{n}$ is the outward normal vector on $\pT$.
\end{lemma}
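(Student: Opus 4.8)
The plan is to derive the error equation by comparing the variational form satisfied by the exact solution $u$ against the SWG scheme \eqref{equation.SWG}, using $Q_bu$ as the intermediary. First I would test the PDE \eqref{ellipticbdy} against $\S(v_b)$ on each element: multiplying by $\S(v_b)$ and integrating by parts on $T$ gives
\[
(\alpha\nabla u,\nabla\S(v_b))_T - \langle \alpha\tfrac{\partial u}{\partial\bm n},\S(v_b)\rangle_\pT + (\bbeta\cdot\nabla u,\S(v_b))_T + (cu,\S(v_b))_T = (f,\S(v_b))_T.
\]
Summing over $T\in\T_h$ and using that the trace terms telescope against the continuity of $\alpha\tfrac{\partial u}{\partial\bm n}$ across interior edges and that $v_b$ vanishes on $\partial\Omega$, one can insert $\langle\alpha\tfrac{\partial u}{\partial\bm n},v_b\rangle_\pT$ freely (it sums to zero), yielding $\sum_T\langle\alpha\tfrac{\partial u}{\partial\bm n},\S(v_b)-v_b\rangle_\pT$ as the leftover boundary contribution.

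The second step is to rewrite the three volume terms so that $u$ is replaced by its projections and the SWG bilinear forms $a_T,b_T,c_T$ appear. For the diffusion term, since $\alpha$ is piecewise constant, $(\alpha\nabla u,\nabla\S(v_b))_T = (\alpha Q_0(\nabla u),\nabla\S(v_b))_T = (\alpha Q_0(\nabla u),\nabla_w v_b)_T$ up to a boundary term, because $\nabla\S(v_b)$ and $\nabla_w v_b$ differ only through the jump $v_b-Q_b\S(v_b)$ (this is exactly the identity used in the proof of Lemma \ref{Lemma:lemma1}); matching this against $a_T(Q_bu,v_b)=(\alpha\nabla_w Q_bu,\nabla_w v_b)_T$ requires the key fact that $\nabla_w(Q_bu)=Q_0(\nabla u)$ on each $T$, which follows from \eqref{DefWGpoly-new} and the divergence theorem. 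The convection term is handled by writing $(\bbeta\cdot\nabla u,\S(v_b))_T = (\bbeta\cdot Q_0(\nabla u),\S(v_b))_T + ((I-Q_0)\nabla u,\S(v_b)\bbeta)_T$ and identifying the first piece with $b_T(Q_bu,v_b)$ via $\nabla_w Q_bu = Q_0(\nabla u)$; the remainder is the term $((Q_0-I)\nabla u,\S(v_b)\bbeta)$ in $\ell_u$. The reaction term gives $(cu,\S(v_b))_T = c_T(Q_bu,v_b) + (c(u-\S(Q_bu)),\S(v_b))_T$, contributing the last term of $\ell_u$ (with a sign). Adding and subtracting $\kappa S(Q_bu,v_b)$ then produces $\kappa S(Q_bu,v_b)$ as the remaining piece of $\ell_u$.

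Putting these together, the left-hand side becomes $\kappa S(Q_bu,v_b)+\B(Q_bu,v_b)$ minus the collected remainder terms, and subtracting \eqref{equation.SWG} (which reads $\kappa S(u_b,v_b)+\B(u_b,v_b)=(f,\S(v_b))$) gives $\kappa S(e_b,v_b)+\B(e_b,v_b)=\ell_u(v_b)$ with $\ell_u$ exactly as in \eqref{equ.EE.1-1}. I expect the main obstacle to be the bookkeeping in Step 2 — specifically, being careful that each "insertion of a projection" produces precisely one boundary-jump term involving $v_b-Q_b\S(v_b)$ or $v_b-\S(v_b)$ and that these are absorbed correctly (either cancelling or landing in the designated $\ell_u$ terms), together with the verification of the commuting identity $\nabla_w(Q_bu)=Q_0(\nabla u)$ which is what makes the piecewise-constant-$\alpha$ hypothesis do its job.
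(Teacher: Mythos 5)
Your proposal is correct and follows essentially the same route as the paper's proof: the same key commuting identity $\nabla_w(Q_bu)=Q_0(\nabla u)$, the same projection insertions producing the boundary term $\langle \alpha\partial u/\partial\bm{n}-\alpha Q_0(\nabla u)\cdot\bm{n},\S(v_b)-v_b\rangle_{\partial T}$ (using that $\sum_T\langle\alpha\partial u/\partial\bm{n},v_b\rangle_{\partial T}=0$), and the same treatment of the convection and reaction remainders. The only difference is direction of travel — you start from the PDE tested against $\S(v_b)$ and work toward $\B(Q_bu,v_b)$, whereas the paper starts from $\B(Q_bu,v_b)$ and works toward $(f,\S(v_b))$ — which is purely presentational.
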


\begin{proof}
We first consider the weak gradient of $Q_b u$, for any constant vector $\bm{\phi}$, we have
\begin{eqnarray*}
(\nabla_w Q_b u,\bm{\phi})_{T}
&=& \langle Q_b u, \bm{\phi}\cdot\bm{n} \rangle_{\partial T}
= \langle u, \bm{\phi}\cdot\bm{n} \rangle_{\partial T}\\
&=& (\nabla u, \bm{\phi})_{T}
= (Q_0(\nabla u), \bm{\phi})_{T},
\end{eqnarray*}
which implies $\nabla_w Q_b u \equiv Q_0(\nabla u)$.
Thus, for any $v_b \in W_h^0(T_h)$, we have
\begin{equation}\label{EQ:810}
\begin{split}
&(\alpha\nabla_w Q_b u,\nabla_w v_b)
= \sum_{T}(\alpha Q_0(\nabla u),\nabla_w v_b)_T \\
= & \sum_{T} \langle \alpha Q_0(\nabla u)\cdot\bm{n},v_b\rangle_{\partial T}\\
=& \sum_{T} \langle \alpha Q_0(\nabla u)\cdot\bm{n},v_b\rangle_{\partial T}-\langle\alpha Q_0(\nabla u)\cdot\bm{n},\S(v_b)\rangle_{\partial T} + \langle \alpha Q_0(\nabla u)\cdot\bm{n},\S(v_b)\rangle_{\partial T}\\
=& \sum_{T} \langle \alpha Q_0(\nabla u)\cdot\bm{n},v_b-\S(v_b)\rangle_{\partial T} +(\alpha\nabla u,\nabla \S(v_b))_{T} \\
=& \sum_{T}\langle \alpha Q_0(\nabla u)\cdot\bm{n},v_b-\S(v_b)\rangle_{\partial T} +(-\nabla\cdot(\alpha \nabla u) ,\S(v_b))_{T} + \langle\alpha\frac{\partial u}{\partial \bm{n}},\S(v_b)\rangle_{\partial T}\\
=& (-\nabla\cdot(\alpha \nabla u) ,\S(v_b))+ \sum_{T}\langle \alpha\frac{\partial u}{\partial \bm{n}}-\alpha Q_0(\nabla u)\cdot\bm{n},\S(v_b)-v_b\rangle_{\partial T}.
\end{split}
\end{equation}
Next, from $\nabla_w (Q_bu) = Q_0(\nabla u)$, we have
\begin{equation}\label{EQ:108}
\begin{split}
\sum_T (\bbeta\cdot \nabla_w (Q_bu), \S(v_b))_T = & \sum_T (\bbeta\cdot (Q_0 \nabla u), \S(v_b))_T \\
= & \sum_T (\bbeta\cdot \nabla u, \S(v_b))_T + \sum_T ((Q_0-I) \nabla u, \S(v_b)\bbeta)_T,
\end{split}
\end{equation}
and
\begin{equation}\label{EQ:109}
\sum_T (c\S(Q_bu), \S(v_b))_T = (cu, \S(v_b)) + (c(\S(Q_bu)-u), \S(v_b)).
\end{equation}
The sum of \eqref{EQ:810}, \eqref{EQ:108}, and \eqref{EQ:109} gives rise to
\begin{equation*}
\begin{split}
\B(Q_b u, v_b) = &\ (f, \S(v_b)) + \sum_{T}\langle \alpha\frac{\partial u}{\partial \bm{n}}-\alpha Q_0(\nabla u)\cdot\bm{n},\S(v_b)-v_b\rangle_{\partial T} \\
& \ + \sum_T ((Q_0-I) \nabla u, \S(v_b)\bbeta)_T + (c(\S(Q_bu)-u), \S(v_b)),
\end{split}
\end{equation*}
which, combined with $ (f, \S(v_b) = \kappa S(u_b, v_b) + \B(u_b,v_b)$, leads to
\begin{equation*}
\begin{split}
\B(Q_b u - u_b, v_b) = &\ \kappa S(u_b,v_b) + \sum_{T}\langle \alpha\frac{\partial u}{\partial \bm{n}}-\alpha Q_0(\nabla u)\cdot\bm{n},\S(v_b)-v_b\rangle_{\partial T} \\
& \ + \sum_T ((Q_0-I) \nabla u, \S(v_b)\bbeta)_T + (c(\S(Q_bu)-u), \S(v_b)),
\end{split}
\end{equation*}
and
 \begin{equation*}
\begin{split}
&\kappa S(Q_bu-u_b,v_b)+\B(Q_b u - u_b, v_b) \\
 = &\ \kappa S(Q_bu,v_b) + \sum_{T}\langle \alpha\frac{\partial u}{\partial \bm{n}}-\alpha Q_0(\nabla u)\cdot\bm{n},\S(v_b)-v_b\rangle_{\partial T} \\
& \ + \sum_T ((Q_0-I) \nabla u, \S(v_b)\bbeta)_T + (c(\S(Q_bu)-u), \S(v_b)).
\end{split}
\end{equation*}
This completes the proof of the lemma.
\end{proof}

\begin{remark}
It should be pointed out that Lemma \ref{lemma.EE.1} can be extended to the case when $\alpha$ is in $L^\infty(\Omega)$ and piecewise smooth with respect to the finite element partition $\T_h$. Detailed analysis can be established by following the approach presented in \cite{wg-systematic}.
\end{remark}

The following result is concerned with the error estimate for the SWG numerical solutions in a discrete $H^1$ norm.

\begin{theorem}\label{thm:H1}
 Let $u\in H^2(\Omega)$ be the exact solution of \eqref{ellipticbdy}-\eqref{ellipticbc} and $u_b\in W_h(\T_h)$ be the approximate solution arising from the numerical scheme (\ref{equation.SWG}). Assume $\bbeta\in C^1(\bar\Omega)$ and that \eqref{EQ:positive} is satisfied. Then, the following error estimate holds true
\begin{equation}\label{eq.H1}
\kappa S(e_b ,e_b) +(\alpha\nabla_w e_b, \nabla_w e_b )\leq Ch^2\| u\|_2^2,
\end{equation}
provided that the meshsize $h$ is sufficiently small. Consequently, we have
\begin{eqnarray}
\| \nabla_w u_b- \nabla u\|_0 \leq Ch \|u\|_2, \label{eq.EE.H1}
\end{eqnarray}
\end{theorem}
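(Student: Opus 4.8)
The plan is to combine the coercivity estimate of Lemma \ref{Lemma:lemma4} with the error equation of Lemma \ref{lemma.EE.1}, and then bound each term in the linear functional $\ell_u(\cdot)$ by $Ch\|u\|_2\,\3bar v_b\3bar$. First I would take $v_b = e_b$ in the error equation \eqref{equ.EE.1}, so that the left-hand side becomes $\kappa S(e_b,e_b) + \B(e_b,e_b)$. By Lemma \ref{Lemma:lemma4}, for $h$ small this is bounded below by $\Lambda\3bar e_b\3bar^2 = \Lambda\bigl(\kappa S(e_b,e_b) + a(e_b,e_b)\bigr)$, which is exactly the quantity we wish to estimate on the left of \eqref{eq.H1}. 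So it remains to show $|\ell_u(e_b)| \le Ch\|u\|_2\,\3bar e_b\3bar$; dividing through by $\3bar e_b\3bar$ then yields $\3bar e_b\3bar \le Ch\|u\|_2$, which is \eqref{eq.H1}.

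The bulk of the work is estimating the four groups of terms in \eqref{equ.EE.1-1}. For the first term, $\sum_T \langle \alpha\partial_{\bm n}u - \alpha Q_0(\nabla u)\cdot\bm n, \S(v_b) - v_b\rangle_\pT$, I would use the trace inequality and the approximation property of $Q_0$ to get $\|\alpha\partial_{\bm n}u - \alpha Q_0(\nabla u)\cdot\bm n\|_\pT \le Ch^{1/2}\|u\|_{2,T}$, and pair it with $\|\S(v_b) - v_b\|_\pT$, which by \eqref{EQ:815:102} is controlled by $Ch^{1/2}\3bar v_b\3bar$ locally; summing and applying Cauchy-Schwarz over $T$ gives the $Ch\|u\|_2\3bar v_b\3bar$ bound. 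For the stabilizer term $\kappa S(Q_bu, v_b)$, I would note $S_T(Q_bu, v_b)^{1/2} \le S_T(v_b,v_b)^{1/2}\cdot h^{-1/2}\|Q_b\S(Q_bu) - Q_bu\|_\pT$; since $\S(Q_bu)$ is the least-squares $P_1$ fit and $Q_bu$ is the edge-average of $u$, the quantity $\|Q_b\S(Q_bu) - Q_bu\|_\pT$ is bounded by $Ch^{3/2}\|u\|_{2,T}$ (both approximate $u$ to second order on edges), so $\kappa S(Q_bu,v_b) \le Ch\|u\|_2\3bar v_b\3bar$. For the convection term $((Q_0 - I)\nabla u, \S(v_b)\bbeta)$, the factor $\|(Q_0-I)\nabla u\|_T \le Ch\|u\|_{2,T}$ and $\|\S(v_b)\bbeta\|_T \le C\|\S(v_b)\|_T \le C\3bar v_b\3bar$ by the Poincaré estimate \eqref{EQ:815:901}, giving $Ch\|u\|_2\3bar v_b\3bar$. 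The reaction term $(c(\S(Q_bu) - u), \S(v_b))$ is handled similarly once one shows $\|\S(Q_bu) - u\|_T \le Ch\|u\|_{2,T}$, which follows because $\S(Q_bu)$ reproduces linear functions and approximates $u$ by a standard Bramble-Hilbert / least-squares argument.

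The main obstacle I anticipate is the careful treatment of the stabilizer term $\kappa S(Q_bu, v_b)$ and establishing the approximation bound $\|\S(Q_bu) - u\|_{0,T} + h^{1/2}\|\S(Q_bu) - u\|_{0,\pT} \le Ch\|u\|_{2,T}$ for the least-squares extension operator $\S$. Unlike a standard nodal interpolant, $\S$ is defined implicitly through the minimization condition \eqref{Def.extension}, so one needs a stability bound for $\S$ (boundedness of $\S : W(T) \to P_1(T)$ in suitable scaled norms, uniform in $h$ by shape-regularity) together with its reproduction of $P_1(T)$, and then invoke a scaling/Bramble-Hilbert argument on a reference element. This is routine in spirit but technically the most delicate piece, and it is where shape-regularity of $\T_h$ enters crucially. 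Once \eqref{eq.H1} is in hand, the consequence \eqref{eq.EE.H1} follows immediately: write $\nabla_w u_b - \nabla u = -\nabla_w e_b + (\nabla_w Q_bu - \nabla u) = -\nabla_w e_b + (Q_0\nabla u - \nabla u)$ using $\nabla_w Q_bu = Q_0(\nabla u)$ from the proof of Lemma \ref{lemma.EE.1}, then bound $\|\nabla_w e_b\|_0 \le C\alpha_0^{-1/2}(\alpha\nabla_w e_b,\nabla_w e_b)^{1/2} \le Ch\|u\|_2$ by \eqref{eq.H1} and $\|Q_0\nabla u - \nabla u\|_0 \le Ch\|u\|_2$ by standard approximation theory, and apply the triangle inequality.
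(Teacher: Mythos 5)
Your proposal is correct and follows essentially the same route as the paper: take $v_b=e_b$ in the error equation of Lemma \ref{lemma.EE.1}, invoke the coercivity of Lemma \ref{Lemma:lemma4}, and bound each term of $\ell_u(\cdot)$ by $Ch\|u\|_2\,\3bar v_b\3bar$, with \eqref{eq.EE.H1} then following from the triangle inequality and $\nabla_w Q_b u = Q_0(\nabla u)$. The only cosmetic difference is in the stabilizer term, where the paper exploits the orthogonality in \eqref{Def.extension} to replace $Q_b\S(Q_bu)$ by $Q_b(Q_1u)$ and reduce to $\|u-Q_1u\|_{\pT}$, whereas you appeal directly to the $P_1$-reproduction and approximation property of $\S\circ Q_b$; both yield the same $O(h)$ bound.
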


\begin{proof} The proof is based on the error equation (\ref{equ.EE.1}) through a thorough analysis for the linear functional $\ell_u(\cdot)$ given in (\ref{equ.EE.1-1}). For the first term on the righ-hand side of (\ref{equ.EE.1-1}), from the usual Cauchy-Schwarz inequality we have
\begin{equation}\label{equ.EE.2}
\begin{split}
&\ |\langle\alpha\frac{\partial u}{\partial \bm{n}}-\alpha Q_0(\nabla u)\cdot\bm{n},\S(v_b)-v_b\rangle_{\partial T}| \\
\leq & \ \|\alpha\frac{\partial u}{\partial \bm{n}}-\alpha Q_0(\nabla u)\cdot\bm{n}\|_{0,\pT} \|\S(v_b)-v_b\|_{0,\pT}\\
\leq & \ \|\alpha\|_{\infty} \|\nabla u- Q_0(\nabla u)\|_{0,\pT} \|\S(v_b)-v_b\|_{0,\pT}.
\end{split}
\end{equation}
Now using the estimate \eqref{EQ:815:102} in the above inequality and then summing over all the element $T\in\T_h$ we arrive at the following:
\begin{equation}\label{eq.EE.r1}
\begin{split}
&\sum_{T\in\T_h}|\langle\alpha\frac{\partial u}{\partial \bm{n}}-\alpha Q_0(\nabla u)\cdot\bm{n},\S(v_b)-v_b\rangle_{\partial T}| \\
\leq& C \|\alpha\|_{\infty} \sum_{T\in\T_h} \|\nabla u- Q_0(\nabla u)\|_\pT \left(h\|\nabla_w v_b\|^2_T +\|v_b-Q_b\S(v_b)\|^2_\pT\right)^{\frac{1}{2}}\\
\leq & C\|\alpha\|_{\infty}\left(\|\nabla u -Q_0(\nabla u)\|^2_{0} + h^2\|\nabla^2 u \|_{0}^2\right)^{\frac{1}{2}}\left( \|\nabla_w v_b\|^2+ \kappa S(v_b, v_b)\right)^{\frac{1}{2}}\\
\leq & C h \|u\|_2 \3bar v_b \3bar.
\end{split}
\end{equation}

As to the second term on the right hand side of (\ref{equ.EE.1-1}), we have
\begin{equation}\label{eq.EE.r2}
\begin{split}
|S(Q_bu,v_b)| =&\sum_{T}h^{-1}\langle Q_bu-Q_b\S(Q_bu),v_b-Q_b \S(v_b)\rangle_{\partial T} \\
=&\sum_{T}h^{-1}\langle Q_bu,v_b-Q_b \S(v_b)\rangle_{\partial T} \\
=&\sum_{T}h^{-1}\langle Q_bu -Q_b(Q_1 u),v_b-Q_b \S(v_b)\rangle_{\partial T}\\
=&\sum_{T}h^{-1}\langle u -Q_1 u,v_b-Q_b \S(v_b)\rangle_{\partial T}\\
\leq &\left(\sum_{T} h^{-1} \int_{\partial T}|u-Q_1u|^2ds \right)^{\frac{1}{2}}S(v_b, v_b)^{\frac{1}{2}}\\
\leq & C\left(h^{-2}\|u-Q_1 u\|^2 + \|u-Q_1 u\|_1^2 \right)^{\frac{1}{2}}S(v_b, v_b)^{\frac{1}{2}}\\
\leq & C h \|u\|_2 \3bar v_b\3bar.
\end{split}
\end{equation}

The third term on the right hand side of (\ref{equ.EE.1-1}) can be bounded by using the usual error estimate for $L^2$ projections as follows:
\begin{equation}\label{EQ:401}
\begin{split}
|((Q_0-I) \nabla u, \S(v_b)\bbeta)| = &  |((Q_0-I) \nabla u, (Q_0-I) (\S(v_b)\bbeta))| \\
\le & \|(Q_0-I) \nabla u\| \ \|(Q_0-I) (\S(v_b)\bbeta)\| \\
\le & C h^2 \|\nabla^2 u\| \left( \|\nabla \S(v_b)\| + \|\S(v_b)\|\right) \\
\le & C h^2 \|\nabla^2 u\| \3bar v_b\3bar,
\end{split}
\end{equation}
where we have used the estimates \eqref{EQ:815:101} and \eqref{EQ:815:901} in the last line.

The last term on the right hand side of (\ref{equ.EE.1-1}) can be estimated as follows:
\begin{equation}\label{EQ:402}
\begin{split}
|(c(\S(Q_bu)-u), \S(v_b))|\leq & \|c\|_\infty \|\S(Q_bu)-u\| \|\S(v_b)\|\\
\leq & C (\|\S(Q_bu)-Q_1 u\| + \|Q_1u - u\|) \|\S(v_b)\|\\
\leq & C ( \|\S(Q_bu)-\S(Q_1 u)\| + \|Q_1u - u\|) \|\S(v_b)\|\\
\leq & C ( \|\S(Q_bu-Q_1 u)\| + \|Q_1u - u\|) \|\S(v_b)\|\\
\leq & C h^2 \|u\|_2 \3bar v_b\3bar.
\end{split}
\end{equation}

Substituting the estimates \eqref{eq.EE.r1}-\eqref{EQ:402} into the error equation \eqref{eq.EE.r1} yields
$$
\kappa S(e_b, v_b) + \B(e_b, v_b) \leq C h \|u\|_2 \3bar v_b\3bar,
$$
which, together with the coercivity \eqref{EQ:815:200-0}, leads to
$$
\Lambda \3bar e_b\3bar^2 \leq C h \|u\|_2 \3bar e_b\3bar.
$$
The last inequality implies the error estimate \eqref{eq.H1}.

Finally, from the triangle inequality and the error estimate \eqref{eq.H1}, we obtain
\begin{equation*}
\begin{split}
\|\nabla_w u_b - \nabla u\| \leq & \|\nabla_w (u_b - Q_b u)\| + \|\nabla_w (Q_b u) - \nabla u\|\\
 = & \|\nabla_w e_b\| + \|Q_0 (\nabla u) - \nabla u\|\\
 \leq & C h \|u\|_2,
 \end{split}
 \end{equation*}
which gives rise to \eqref{eq.EE.H1}. This completes the proof of the theorem.
\end{proof}

\section{Error Estimates in $L^2$}\label{sectionEE-L2} We use the usual duality argument to derive an error estimate in $L^2$ for the numerical solutions arising from \eqref{equation.SWG}. The analysis to be presented is a modified version of those developed in \cite{WangYe_2013, mwy, wg-systematic}.

Consider the following auxiliary problem that seeks $\Phi\in H_0^1(\Omega)$ such that
\begin{eqnarray}
-\nabla\cdot(\alpha\nabla \Phi) - \nabla\cdot(\bbeta\Phi) + c\Phi &=&\chi\quad {\rm in}\  \Omega  \label{dual-problem}\\
\Phi&=&0\quad {\rm on}\ \partial\Omega, \label{dualbc}
\end{eqnarray}
where $\chi\in L^2(\Omega)$. Assume that the solution of the problem \eqref{dual-problem}-\eqref{dualbc} exists and has the $H^2$-regularity:
\begin{equation}\label{dual-regularity}
\|\Phi\|_2 \leq C \|\chi\|,
\end{equation}
where $C$ is a constant depending only on the domain and the coefficients $\alpha, \bbeta$, and $c$.

\begin{theorem}\label{thm:L2}
 Let $u\in H^2(\Omega)$ be the exact solution of \eqref{ellipticbdy}-\eqref{ellipticbc} and $u_b\in W_h(\T_h)$ be the approximate solution arising from the numerical scheme (\ref{equation.SWG}). Assume $\bbeta\in C^1(\bar\Omega)$ and the conditions \eqref{EQ:positive} and \eqref{dual-regularity} are satisfied. Then, the following $L^2$ error estimate holds true
\begin{equation}\label{eq.L2}
\|u - \S(u_b)\| \le Ch^2\|u\|_2,
\end{equation}
provided that the meshsize $h$ is sufficiently small.
\end{theorem}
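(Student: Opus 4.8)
The plan is to use the standard duality argument adapted to the SWG setting. First I would set $\chi = u - \S(u_b)$ in the auxiliary problem \eqref{dual-problem}-\eqref{dualbc}, so that $\|u-\S(u_b)\|^2 = (\chi, u-\S(u_b))$, and then rewrite this $L^2$ inner product by testing the dual PDE against $\S(e_b)$ where $e_b = Q_bu - u_b$. Specifically, I would expand $(\chi, u - \S(u_b))$ using $u - \S(u_b) = (u - \S(Q_bu)) + \S(Q_bu - u_b) = (u-\S(Q_bu)) + \S(e_b)$; the first piece $(\chi, u-\S(Q_bu))$ is a pure approximation term bounded by $Ch^2\|u\|_2\|\chi\|$ using the $\S$-approximation properties (which follow from Lemma \ref{Lemma:lemma1} applied with $v_b = Q_bu$, together with the usual polynomial approximation estimates). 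The main work is the term $(\chi, \S(e_b))$.

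For $(\chi,\S(e_b))$, I would integrate by parts in the dual equation. Testing $-\nabla\cdot(\alpha\nabla\Phi) - \nabla\cdot(\bbeta\Phi) + c\Phi = \chi$ against $\S(e_b)$ on each element and summing, one obtains a WG-style identity involving $(\alpha\nabla_w e_b, \nabla_w(Q_b\Phi))$, $(\bbeta\cdot\nabla_w e_b, \S(Q_b\Phi))$, $(c\S(e_b),\S(Q_b\Phi))$, plus stabilizer and consistency remainder terms of the same flavor as those in Lemma \ref{lemma.EE.1} (edge terms with $\Phi$ playing the role of the test function, and $Q_0\nabla\Phi - \nabla\Phi$ type projection errors). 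This recasts $(\chi,\S(e_b))$, up to controllable remainders, as $\kappa S(e_b, Q_b\Phi) + \B(e_b, Q_b\Phi)$ — but wait, the dual operator is the adjoint, so more precisely it becomes $\kappa S(Q_b\Phi, e_b) + \B(Q_b\Phi, e_b)$ matched against the primal bilinear form's adjoint structure. I would then invoke the error equation \eqref{equ.EE.1}: since $\kappa S(e_b, v_b) + \B(e_b, v_b) = \ell_u(v_b)$ for all $v_b \in W_h^0(\T_h)$, taking $v_b = Q_b\Phi$ gives a bound $|\kappa S(e_b, Q_b\Phi) + \B(e_b, Q_b\Phi)| \le |\ell_u(Q_b\Phi)|$. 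Re-examining each of the four terms in $\ell_u(Q_b\Phi)$ but now extracting an extra power of $h$ from $\|\Phi\|_2 \le C\|\chi\|$ (rather than crudely bounding $\3bar Q_b\Phi\3bar$), each contributes $Ch^2\|u\|_2\|\chi\|$; combined with the already-established $H^1$ estimate $\3bar e_b\3bar \le Ch\|u\|_2$ from Theorem \ref{thm:H1} where a first factor of $h$ is needed, the products yield the desired $h^2$.

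Assembling, I would collect: $(\chi, u-\S(Q_bu)) \le Ch^2\|u\|_2\|\chi\|$; the remainder terms from the dual integration by parts, each a product of an $O(h)$ approximation factor (projection errors for $\Phi$, bounded via $\|\Phi\|_2$) with an $O(h\|u\|_2)$ factor ($\3bar e_b\3bar$ or $\|\nabla_w e_b\|$); and the main term $|\ell_u(Q_b\Phi)|$, re-estimated as above. Each group is $O(h^2\|u\|_2\|\chi\|)$, so $\|u-\S(u_b)\|^2 = (\chi, u-\S(u_b)) \le Ch^2\|u\|_2 \|u - \S(u_b)\|$, giving \eqref{eq.L2} after dividing. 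The main obstacle I anticipate is bookkeeping the integration-by-parts identity for the dual problem cleanly: one must carefully track the non-symmetry (the $-\nabla\cdot(\bbeta\Phi)$ term is the adjoint of $\bbeta\cdot\nabla u$), verify that the weak-gradient and $\S$-extension manipulations from Lemmas \ref{Lemma:lemma1}-\ref{lemma.EE.1} apply verbatim to $\Phi$, and ensure all consistency remainders genuinely carry the extra $h$ — in particular the term $\langle\alpha\partial_n\Phi - \alpha Q_0\nabla\Phi\cdot\bn, \S(e_b)-e_b\rangle_\pT$ needs \eqref{EQ:815:102} for $e_b$ paired with the $O(h)$ flux-projection error of $\Phi$, and the handling of $\S(Q_b\Phi)$ versus $Q_b\S(\Phi)$ versus $\Phi$ must be done consistently with the technique already used in \eqref{eq.EE.r2} and \eqref{EQ:402}.
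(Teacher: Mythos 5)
Your proposal is correct and follows essentially the same route as the paper: a duality argument testing the adjoint problem against $\S(e_b)$, recasting the result (up to $O(h)\3bar e_b\3bar\|\Phi\|_2$ consistency remainders and the substitution $\Phi\to\S(Q_b\Phi)$) as $\kappa S(e_b,Q_b\Phi)+\B(e_b,Q_b\Phi)$, invoking the error equation \eqref{equ.EE.1} with $v_b=Q_b\Phi$, re-estimating $\ell_u(Q_b\Phi)$ with the extra power of $h$ supplied by $\|\Phi\|_2\le C\|\chi\|$, and closing with the $H^1$ bound $\3bar e_b\3bar\le Ch\|u\|_2$ and the triangle inequality against $\|u-\S(Q_bu)\|$. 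The points you flag as needing care (the adjoint convection term, the $\S(Q_b\Phi)$ versus $\Phi$ bookkeeping, and the vanishing of $\sum_T\langle\alpha\partial_{\bn}u-\alpha Q_0(\nabla u)\cdot\bn,\Phi-Q_b\Phi\rangle_{\pT}$) are exactly the ones the paper resolves.
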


\begin{proof}
On each element $T\in\T_h$, we test \eqref{dual-problem} against the linear function $\S(e_b)$ to obtain
\begin{eqnarray*}
(\chi, \S(e_b))_T &=& (\alpha\nabla\Phi, \nabla\S(e_b))_T + (\bbeta\Phi, \nabla\S(e_b))_T + (c\Phi, \S(e_b))_T \\
&& - \langle \alpha\nabla\Phi\cdot\bn, \S(e_b)\rangle_\pT -\langle \bbeta\cdot\bn \Phi, \S(e_b)\rangle_\pT\\
&=& (\alpha Q_0(\nabla\Phi), \nabla\S(e_b))_T + (Q_0(\bbeta\Phi), \nabla\S(e_b))_T + (c\Phi, \S(e_b))_T \\
&& - \langle \alpha\nabla\Phi\cdot\bn, \S(e_b)\rangle_\pT -\langle \bbeta\cdot\bn \Phi, \S(e_b)\rangle_\pT\\
&=& (\alpha Q_0(\nabla\Phi), \nabla_w e_b)_T + (Q_0(\bbeta\Phi), \nabla_w e_b)_T + (c\Phi, \S(e_b))_T \\
&& - \langle \alpha\nabla\Phi\cdot\bn, \S(e_b)\rangle_\pT -\langle \bbeta\cdot\bn \Phi, \S(e_b)\rangle_\pT \\
&& - \langle \alpha Q_0(\nabla\Phi)\cdot\bn, e_b-\S(e_b)\rangle_\pT - \langle Q_0(\bbeta\Phi)\cdot\bn, e_b-\S(e_b)\rangle_\pT
\end{eqnarray*}
By using $Q_0(\nabla \Phi) = \nabla_w (Q_b\Phi)$ and $(Q_0(\bbeta\Phi), \nabla_w e_b)_T = (\bbeta\cdot \nabla_w e_b,  \Phi)_T$ in the above equation, we have from summing over all $T\in\T_h$ that
\begin{equation}\label{EQ:818:100}
\begin{split}
(\chi, \S(e_b))
=& (\alpha \nabla_w e_b, \nabla_w (Q_b\Phi)) + (\bbeta\cdot\nabla_w e_b, \Phi) + (c\S(e_b), \Phi) \\
& - \sum_T \langle \alpha\nabla \Phi\cdot\bn - \alpha Q_0(\nabla \Phi)\cdot\bn, \S(e_b)-e_b\rangle_\pT\\
& -\langle \bbeta\Phi\cdot\bn - Q_0(\bbeta\Phi)\cdot\bn , \S(e_b)-e_b\rangle_\pT.
\end{split}
\end{equation}
The last two terms on the right-hand side of \eqref{EQ:818:100} can be bounded by $C h \|\Phi\|_2 \3bar e_b\3bar$ through the Cauchy-Schwarz inequality. Thus, we have
\begin{equation}\label{EQ:818:101}
\begin{split}
|(\chi, \S(e_b))|
\leq & |(\alpha \nabla_w e_b, \nabla_w (Q_b\Phi)) + (\bbeta\cdot\nabla_w e_b, \Phi) + (c\S(e_b), \Phi)| \\
& + Ch\|\Phi\|_2 \3bar e_b\3bar\\
\leq & |(\alpha \nabla_w e_b, \nabla_w (Q_b\Phi)) + (\bbeta\cdot\nabla_w e_b, \S(Q_b\Phi)) + (c\S(e_b), \S(Q_b\Phi))| \\
& + Ch\|\Phi\|_2 \3bar e_b\3bar,
\end{split}
\end{equation}
where have also used $\|\Phi-\S(Q_b\Phi)\| \leq C h^2\|\Phi\|_2$. Now, recall that
$$
(\alpha \nabla_w e_b, \nabla_w (Q_b\Phi)) + (\bbeta\cdot\nabla_w e_b, \S(Q_b\Phi)) + (c\S(e_b), \S(Q_b\Phi))
= \B(e_b, Q_b\Phi),
$$
and from the error equation \eqref{equ.EE.1}, we have
\begin{equation}\label{EQ:818:200}
\begin{split}
\B(e_b, Q_b\Phi) = & \ell_u(Q_b\Phi) - \kappa S(e_b, Q_b\Phi)\\
=&\sum_{T\in\T_h} \langle \alpha\frac{\partial u}{\partial \bm{n}}-\alpha Q_0(\nabla u)\cdot\bm{n},\S(Q_b\Phi)-Q_b\Phi\rangle_\pT+\kappa S(u_b,Q_b\Phi)\\
 & + ((Q_0-I) \nabla u, \S(Q_b\Phi)\bbeta) + (c(\S(Q_bu)-u), \S(Q_b\Phi)),
 \end{split}
\end{equation}

The last two terms on the right-hand side of \eqref{EQ:818:200} have the following estimate:
\begin{equation}\label{EQ:818:102-01}
\left|((Q_0-I) \nabla u, \S(Q_b\Phi)\bbeta) + (c(\S(Q_bu)-u), \S(Q_b\Phi))\right| \leq C h^2 \|u\|_2 \|\Phi\|_1.
\end{equation}

The second term, $\kappa S(u_b,Q_b\Phi)$, can be dealt with as follows:
\begin{equation}\label{EQ:818:102}
\begin{split}
\kappa S(u_b,Q_b\Phi) = & \kappa h^{-1} \sum_T \langle u_b-Q_b\S(u_b), Q_b\Phi - Q_b\S(Q_b\Phi)\rangle_\pT\\
= & \kappa h^{-1} \sum_T \langle u_b-Q_b\S(u_b), \Phi - \S(Q_b\Phi)\rangle_\pT\\
\leq & \kappa h^{-1} \sum_T \|u_b-Q_b\S(u_b)\|_\pT \|\Phi - \S(Q_b\Phi)\|_\pT\\
\leq & C h (\3bar e_b\3bar + h\|u\|_2)\|\Phi\|_2.
\end{split}
\end{equation}

As to the first term, we note from the definition of $Q_b$ and $\Phi|_{\partial\Omega}=0$ that
\begin{equation*}
\begin{split}
\sum_{T\in\T_h} \langle \alpha\frac{\partial u}{\partial \bm{n}}-\alpha Q_0(\nabla u)\cdot\bm{n},\Phi-Q_b\Phi\rangle_\pT
= \sum_{T\in\T_h} \langle \alpha\frac{\partial u}{\partial \bm{n}},\Phi-Q_b\Phi\rangle_\pT
= 0.
\end{split}
\end{equation*}
Thus, we have
\begin{equation}\label{EQ:818:105}
\begin{split}
&\sum_{T\in\T_h} \langle \alpha\frac{\partial u}{\partial \bm{n}}-\alpha Q_0(\nabla u)\cdot\bm{n},\S(Q_b\Phi)-Q_b\Phi\rangle_\pT \\
= & \sum_{T\in\T_h} \langle \alpha\frac{\partial u}{\partial \bm{n}}-\alpha Q_0(\nabla u)\cdot\bm{n},\S(Q_b\Phi)-\Phi\rangle_\pT\\
\le & C h^2  \|u\|_2\|\Phi\|_2.
\end{split}
\end{equation}
Substituting \eqref{EQ:818:102-01}, \eqref{EQ:818:102}, and \eqref{EQ:818:105} into \eqref{EQ:818:200} yields the following estimate:
$$
|\B(e_b, Q_b\Phi)| \leq C(h^2 \|u\|_2 + h\3bar e_b\3bar) \|\Phi\|_2,
$$
which, together with \eqref{EQ:818:101}, leads to
\begin{equation}\label{EQ:818:108}
|(\chi, \S(e_b))| \leq C(h^2 \|u\|_2 + h\3bar e_b\3bar) \|\Phi\|_2 \leq C(h^2 \|u\|_2 + h\3bar e_b\3bar) \|\chi\|,
\end{equation}
where the regularity assumption \eqref{dual-regularity} has been employed in the last inequality.

Next, from \eqref{EQ:818:108} and the $H^1$ error estimate \eqref{eq.H1} in Theorem \ref{thm:H1}, we have
\begin{equation*}
|(\chi, \S(e_b))| \leq Ch^2 \|u\|_2 \|\chi\|,
\end{equation*}
which leads to
$$
\|\S(e_b\| \leq C h^2 \|u\|_2.
$$
Finally, we arrive at
$$
\|u-\S(u_b)\|\leq \|u-\S(Q_bu)\| + \|\S(e_b)\|\leq C h^2\|u\|_2,
$$
which completes the proof of the theorem.
\end{proof}

\section{Numerical Experiments}\label{numerical-experiments}
The goal of this section is to numerically verify the error estimates developed in the previous sections for the numerical scheme (\ref{equation.SWG}).
The following metrics are employed to measure the magnitude of the error function:
\begin{eqnarray*}
&&\text{Discrete $L^2$-norm: }\\
&&\|u_b-u \|_{0}=h\left(\sum_{i=1}^{n+1}\sum_{j=1}^{n}|u_{i-\frac{1}{2},j}-u(x_{i-\frac{1}{2}},y_j)|^2  + \sum_{i=1}^{n}\sum_{j=1}^{n+1}|u_{i,j-\frac{1}{2}}-u(x_{i},y_{j-\frac{1}{2}})|^2\right)^{1/2},\\
&&\text{Discrete $H^1$-norm: }\\
&&\|u_b-u\|_1= h \left(\sum_{i=1}^{n}\sum_{j=1}^{n} \left|\frac{u_{i+\frac{1}{2},j}-u_{i-\frac{1}{2},j}}{h} - \frac{\partial u}{\partial x} (x_{i},y_{j})\right|^2 \right. \\
&&\qquad\qquad\qquad \left. + \sum_{i=1}^{n}\sum_{j=1}^{n} \left|\frac{u_{i,j+\frac{1}{2}}-u_{i,j-\frac{1}{2}}}{h} - \frac{\partial u}{\partial y} (x_{i},y_{j})\right|^2  \right)^{1/2} ,\\
\end{eqnarray*}

Our numerical experiments are conducted for the model problem \eqref{ellipticbdy}-\eqref{ellipticbc} on polygonal domains.
The following set of test cases are considered:
\begin{equation}\label{eq.testcase.1}
\left\{
\begin{split}
&u=xy, \\
&\alpha=
\begin{bmatrix}
 1 &0\\
 0 & 1
\end{bmatrix},
\quad \bm{\beta}=
\begin{bmatrix}
1\\
1
\end{bmatrix},
\quad c=1;
\end{split}
\right.
\end{equation}

\begin{equation}\label{eq.testcase.2}
\left\{
\begin{split}
&u= 3x^2+2xy, \\
&\alpha=
\begin{bmatrix}
 2 &0\\
 0 & 1
\end{bmatrix},
\quad \bm{\beta}=
\begin{bmatrix}
1\\
1
\end{bmatrix},
\quad c=1;
\end{split}
\right.
\end{equation}

\begin{equation}\label{eq.testcase.3}
\left\{
\begin{split}
&u=\sin(\pi x)\sin(\pi y) + x^2-y^2,  \\
&\alpha=
\begin{bmatrix}
 1 &0\\
 0 & 1
\end{bmatrix},
\quad \bm{\beta}=
\begin{bmatrix}
1\\
2
\end{bmatrix},
\quad c=1;
\end{split}
\right.
\end{equation}

\begin{equation}\label{eq.testcase.4}
\left\{
\begin{split}
&u=\sin(\pi x)\sin(\pi y),  \\
&\alpha=
\begin{bmatrix}
 xy+1 &0\\
 0 & 3xy
\end{bmatrix},
\quad \bm{\beta}=
\begin{bmatrix}
x^3y+xy+1\\
3x^2y+xy+2
\end{bmatrix},
\quad c=x^4y^2+xy+1;
\end{split}
\right.
\end{equation}
The right-hand side function $f$ and the Dirichlet boundary data $g$ are chosen to match the exact solution $u=u(x,y)$ for each test case.

\begin{table}[h!]
{\small
\begin{center}
\caption{Error and convergence performance of the SWG scheme \eqref{equation.SWG} with $\kappa=4.0$ and uniform square partitions on the unit square domain $\Omega=(0,1)^2$.}\label{tab.testcase1}
\begin{tabular}{||c|cc|cc|cc|cc||}
\hline
     & \multicolumn{4}{| c |}{Test case \eqref{eq.testcase.1}} & \multicolumn{4}{| c |}{Test case \eqref{eq.testcase.2}}\\
\hline
$h^{-1}$ & $\|u_h-u \|_{0}$ & Rate & $\|u_h-u\|_{1}$ & Rate & $\|u_h-u \|_{0}$ & Rate & $\|u_h-u\|_{1}$ & Rate\\
\hline
8  &    2.92e-16&  - &    1.38e-15 & - &    1.32e-02  &    -   &  4.57e-02  &    -\\
16 &    2.86e-15&  - &    1.02e-14 & - &    3.36e-03  & 1.98   &  1.28e-02  & 1.84\\
32 &    1.00e-14&  - &    3.63e-14 & - &    8.43e-04  & 1.99   &  3.49e-03  & 1.87\\
64 &    4.10e-14&  - &    1.48e-13 & - &    2.11e-04  & 2.00   &  9.43e-04  & 1.89\\
128&    1.66e-13&  - &    5.96e-13 & - &    5.28e-05  & 2.00   &  2.52e-04  & 1.90\\
\hline
     & \multicolumn{4}{| c |}{Test case \eqref{eq.testcase.3}} & \multicolumn{4}{| c |}{Test case \eqref{eq.testcase.4}}\\
\hline
$h^{-1}$ & $\|u_h-u \|_{0}$ & Rate & $\|u_h-u\|_{1}$ & Rate & $\|u_h-u \|_{0}$ & Rate & $\|u_h-u\|_{1}$ & Rate\\
\hline
8  &    1.97e-02  &    -   &  4.19e-02  &    -    & 2.59e-02  &    -  &   6.94e-02  &    -\\
16 &    4.93e-03  & 2.00   &  1.05e-02  & 2.00    & 6.48e-03  & 2.00  &   1.76e-02  & 1.98\\
32 &    1.23e-03  & 2.00   &  2.63e-03  & 2.00    & 1.62e-03  & 2.00  &   4.43e-03  & 1.99\\
64 &    3.08e-04  & 2.00   &  6.58e-04  & 2.00    & 4.06e-04  & 2.00  &   1.11e-03  & 1.99\\
128&    7.69e-05  & 2.00   &  1.65e-04  & 2.00    & 1.02e-04  & 2.00  &   2.79e-04  & 2.00\\
\hline
\end{tabular}
\end{center}
}
\end{table}

Table \ref{tab.testcase1} shows the performance of the SWG scheme for each of the above test problems with the stabilizer parameter $\kappa=4$ on uniform square partitions. The results indicate that the numerical approximation is in the machine accuracy for the test problem \eqref{eq.testcase.1} where the exact solution is a bilinear function. For the other three test problems, the numerical solutions have the optimal rate of convergence $r=2$ in the discrete $L^2$ norm and a superconvergence of order $\O(h^2)$ in the discrete $H^1$ norm. The numerical results are consistent with the theoretical prediction in the discrete $L^2$ norm, but they outperform the theory in the discrete $H^1$ norm. It should be pointed out that the superconvergence theory in \cite{LiDanWW} was developed for the diffusion equation only; but a slight modification of the analysis there will yield a superconvergence of order $\O(h^2)$ for the SWG solutions of the full convection-diffusion equation \eqref{ellipticbdy}-\eqref{ellipticbc}.

\begin{table}[h]
{\small
\begin{center}
\caption{Error and convergence performance of the SWG scheme \eqref{equation.SWG} for the test case \eqref{eq.testcase.3} with different values of $\kappa$ on uniform square partitions for $\Omega=(0,1)^2$.}\label{tab.testcase2}
\begin{tabular}{||c|cc|cc|cc|cc||}
\hline
     & \multicolumn{4}{| c |}{$\kappa=0.01$} & \multicolumn{4}{| c |}{$\kappa=0.1$}\\
\hline
$h^{-1}$ & $\|u_h-u \|_{0}$ & Rate & $\|u_h-u\|_{1}$ & Rate & $\|u_h-u \|_{0}$ & Rate & $\|u_h-u\|_{1}$ & Rate\\
\hline
8   &3.30e-01 &     -   &  1.04e+00 &     - &    1.70e-01 &     -  &   5.38e-01 &     -\\
16  &2.50e-01 &  0.40   &  7.97e-01 &  0.39 &    6.67e-02 &  1.35  &   2.16e-01 &  1.31\\
32  &1.30e-01 &  0.94   &  4.19e-01 &  0.93 &    1.98e-02 &  1.75  &   6.74e-02 &  1.68\\
64  &4.59e-02 &  1.51   &  1.51e-01 &  1.47 &    5.23e-03 &  1.92  &   1.92e-02 &  1.81\\
128 &1.29e-02 &  1.83   &  4.52e-02 &  1.74 &    1.33e-03 &  1.98  &   5.31e-03 &  1.86\\
\hline
     & \multicolumn{4}{| c |}{$\kappa=1.0$} & \multicolumn{4}{| c |}{$\kappa=4.0$}\\
\hline
$h^{-1}$ & $\|u_h-u \|_{0}$ & Rate & $\|u_h-u\|_{1}$ & Rate & $\|u_h-u \|_{0}$ & Rate & $\|u_h-u\|_{1}$ & Rate\\
\hline
8  &    3.11e-02 &     -  &   8.97e-02 &     - &    1.97e-02  &    -   &  4.19e-02  &    -\\
16 &    8.12e-03 &  1.94  &   2.53e-02 &  1.83 &    4.93e-03  & 2.00   &  1.05e-02  & 2.00\\
32 &    2.06e-03 &  1.98  &   6.91e-03 &  1.87 &    1.23e-03  & 2.00   &  2.63e-03  & 2.00\\
64 &    5.16e-04 &  1.99  &   1.86e-03 &  1.89 &    3.08e-04  & 2.00   &  6.58e-04  & 2.00\\
128&    1.29e-04 &  2.00  &   4.96e-04 &  1.91 &    7.69e-05  & 2.00   &  1.65e-04  & 2.00\\
\hline
     & \multicolumn{4}{| c |}{$\kappa=6.0$} & \multicolumn{4}{| c |}{$\kappa=20.0$}\\
\hline
$h^{-1}$ & $\|u_h-u \|_{0}$ & Rate & $\|u_h-u\|_{1}$ & Rate & $\|u_h-u \|_{0}$ & Rate & $\|u_h-u\|_{1}$ & Rate\\
\hline
8  &    1.99e-02 &     -  &   4.30e-02  &    - &    2.09e-02  &    -   &  4.96e-02  &    -\\
16 &    4.97e-03 &  2.00  &   1.08e-02  & 1.99 &    5.20e-03  & 2.01   &  1.28e-02  & 1.96\\
32 &    1.24e-03 &  2.00  &   2.73e-03  & 1.99 &    1.30e-03  & 2.00   &  3.27e-03  & 1.96\\
64 &    3.10e-04 &  2.00  &   6.87e-04  & 1.99 &    3.25e-04  & 2.00   &  8.39e-04  & 1.96\\
128&    7.76e-05 &  2.00  &   1.73e-04  & 1.99 &    8.12e-05  & 2.00   &  2.15e-04  & 1.97\\
\hline
\end{tabular}
\end{center}
}
\end{table}

\subsection{On the influence of the stabilizer parameter}
The goal of this subsection is to test the influence of the stabilizer parameter $\kappa$ on the numerical solutions. This part of the numerical experiment considers only the test cases \eqref{eq.testcase.3} and \eqref{eq.testcase.4} with the following six values of $\kappa=0.01, \ 0.1, \ 1.0, \ 4.0, \ 6.0, \ 20.0$.
The case of $\kappa=0$ is not a viable choice , as it was not covered in the convergence theory. In fact, our computation does not suggest any convergence of the scheme when $\kappa=0$.

Tables \ref{tab.testcase2}-\ref{tab.testcase3} illustrate the numerical performance of the SWG scheme with different values of the stabilizer parameter $\kappa$. Note that, for both test cases, the rate of convergence deteriorates as $\kappa$ gets small (e.g. $\kappa=0.01$), particulary on coarse finite element partitions, but the rate of convergence begins to improve when the meshsize $h$ gets small. Optimal rate of convergence and the supercovergence of order $\O(h^2)$ are clearly shown in the tables when $\kappa$ is away from $0$ (e.g., $\kappa\ge 0.1$). The stability and accuracy of the SWG scheme is insensitive to the value of $\kappa$ as long as it stays away from $0$.

\begin{table}[h]
{\small
\begin{center}
\caption{Error and convergence performance of the SWG scheme \eqref{equation.SWG} for the test case \eqref{eq.testcase.4} with different values of $\kappa$ on uniform square partitions for $\Omega=(0,1)^2$.}\label{tab.testcase3}
\begin{tabular}{||c|cc|cc|cc|cc||}
\hline
     & \multicolumn{4}{| c |}{$\kappa=0.01$} & \multicolumn{4}{| c |}{$\kappa=0.1$}\\
\hline
$h^{-1}$ & $\|u_h-u \|_{0}$ & Rate & $\|u_h-u\|_{1}$ & Rate & $\|u_h-u \|_{0}$ & Rate & $\|u_h-u\|_{1}$ & Rate\\
\hline
 8   & 6.16e-01  &    -   &  2.12e+00 &     - &   2.45e-01  &    -   &  8.59e-01  &    -\\
 16  & 4.01e-01  & 0.62   &  1.47e+00 &  0.53 &   8.26e-02  & 1.57   &  3.06e-01  & 1.49\\
 32  & 1.70e-01  & 1.24   &  6.49e-01 &  1.18 &   2.31e-02  & 1.84   &  8.86e-02  & 1.79\\
 64  & 5.30e-02  & 1.68   &  2.09e-01 &  1.64 &   5.99e-03  & 1.95   &  2.35e-02  & 1.91\\
 128 & 1.44e-02  & 1.88   &  5.78e-02 &  1.85 &   1.51e-03  & 1.98   &  6.03e-03  & 1.96\\
 \hline
     & \multicolumn{4}{| c |}{$\kappa=1.0$} & \multicolumn{4}{| c |}{$\kappa=4.0$}\\
\hline
$h^{-1}$ & $\|u_h-u \|_{0}$ & Rate & $\|u_h-u\|_{1}$ & Rate & $\|u_h-u \|_{0}$ & Rate & $\|u_h-u\|_{1}$ & Rate\\
\hline
 8  &   4.80e-02  &    -  &   1.56e-01  &    - &    2.59e-02  &    - &    6.94e-02 &     -\\
 16 &   1.23e-02  & 1.96  &   4.15e-02  & 1.91 &    6.48e-03  & 2.00 &    1.76e-02 &  1.98\\
 32 &   3.10e-03  & 1.99  &   1.07e-02  & 1.96 &    1.62e-03  & 2.00 &    4.43e-03 &  1.99\\
 64 &   7.78e-04  & 2.00  &   2.71e-03  & 1.98 &    4.06e-04  & 2.00 &    1.11e-03 &  1.99\\
 128&   1.95e-04  & 2.00  &   6.84e-04  & 1.98 &    1.02e-04  & 2.00 &    2.79e-04 &  2.00\\
 \hline
     & \multicolumn{4}{| c |}{$\kappa=6.0$} & \multicolumn{4}{| c |}{$\kappa=20.0$}\\
\hline
$h^{-1}$ & $\|u_h-u \|_{0}$ & Rate & $\|u_h-u\|_{1}$ & Rate & $\|u_h-u \|_{0}$ & Rate & $\|u_h-u\|_{1}$ & Rate\\
\hline
8  &    2.39e-02 &     -  &   6.11e-02  &    - &    2.17e-02 &     -  &   5.17e-02 &     -\\
16 &    5.99e-03 &  2.00  &   1.55e-02  & 1.98 &    5.41e-03 &  2.00  &   1.31e-02 &  1.98\\
32 &    1.50e-03 &  2.00  &   3.89e-03  & 1.99 &    1.35e-03 &  2.00  &   3.28e-03 &  1.99\\
64 &    3.75e-04 &  2.00  &   9.76e-04  & 1.99 &    3.38e-04 &  2.00  &   8.23e-04 &  2.00\\
128&    9.37e-05 &  2.00  &   2.44e-04  & 2.00 &    8.46e-05 &  2.00  &   2.06e-04 &  2.00\\
\hline
\end{tabular}
\end{center}
}
\end{table}

\subsection{SWG with general polygonal partitions}
The SWG scheme was applied to the test problem \eqref{eq.testcase.3} with general polygonal partitions. Table \ref{tab.testcase4} shows the error and convergence performance of the scheme on four types of polygonal partitions. The stabilization parameter was set as $\kappa=4$ in all these tests. Optimal order of convergence in the discrete $L^2$ norm can be observed for each polygonal partition, but the superconvergence in the discrete $H^1$ norm was only seen for rectangular partitions. The table shows a numerical rate of convergence of $r=1$ in the discrete $H^1$ norm for three other type of partitions. The result is clearly in consistency with the error estimate developed in Section \ref{sectionEE}.

Fig. \ref{fig.comp_solution} illustrates the contour plots of the numerical solutions on different type of polygonal partitions. It also shows the shape of the polygonal elements in our computation.

\begin{table}[h]
{\small
\begin{center}
\caption{Error and convergence performance of the SWG scheme \eqref{equation.SWG} for the test problem \eqref{eq.testcase.3} on general polygonal partitions for $\Omega=(0,1)^2$, with $\kappa = 4$.}\label{tab.testcase4}
\begin{tabular}{||c|cc|cc|cc|cc||}
\hline
     & \multicolumn{4}{| c |}{Triangular mesh} & \multicolumn{4}{| c |}{Rectangular mesh}\\
\hline
$h^{-1}$ & $\|u_h-u \|_{0}$ & Rate & $\|u_h-u\|_{1}$ & Rate & $\|u_h-u \|_{0}$ & Rate & $\|u_h-u\|_{1}$ & Rate\\
\hline
 8  &   1.29e-02 &     -  &   2.53e-01 &     -   &  1.97e-02 &     -  &   4.19e-02 &     - \\
 16 &   3.25e-03 &  1.99  &   1.27e-01 &  0.99   &  4.93e-03 &  2.00  &   1.05e-02 &  2.00 \\
 32 &   8.16e-04 &  2.00  &   6.35e-02 &  1.00   &  1.23e-03 &  2.00  &   2.63e-03 &  2.00 \\
 64 &   2.04e-04 &  2.00  &   3.17e-02 &  1.00   &  3.08e-04 &  2.00  &   6.58e-04 &  2.00 \\
 128&   5.10e-05 &  2.00  &   1.59e-02 &  1.00   &  7.69e-05 &  2.00  &   1.65e-04 &  2.00 \\
\hline
     & \multicolumn{4}{| c |}{Hexagonal mesh} & \multicolumn{4}{| c |}{Octagonal mesh}\\
\hline
$h^{-1}$ & $\|u_h-u \|_{0}$ & Rate & $\|u_h-u\|_{1}$ & Rate & $\|u_h-u \|_{0}$ & Rate & $\|u_h-u\|_{1}$ & Rate\\
\hline
 8  &   1.38e-02 &     -   &  8.27e-02  &    - &    2.31e-02  &    -  &   8.19e-02 &     -\\
 16 &   3.34e-03 &  2.04   &  4.00e-02  & 1.05 &    5.83e-03  & 1.98  &   4.08e-02 &  1.00\\
 32 &   8.69e-04 &  1.94   &  2.06e-02  & 0.96 &    1.55e-03  & 1.91  &   1.99e-02 &  1.03\\
 64 &   2.21e-04 &  1.98   &  1.05e-02  & 0.97 &    3.61e-04  & 2.10  &   1.01e-02 &  0.98\\
 128&   5.52e-05 &  2.00   &  5.32e-03  & 0.98 &    9.50e-05  & 1.92  &   5.08e-03 &  0.99\\
 \hline
\end{tabular}
\end{center}
}
\end{table}

\begin{figure}[h]
\includegraphics [width=1.0\textwidth]{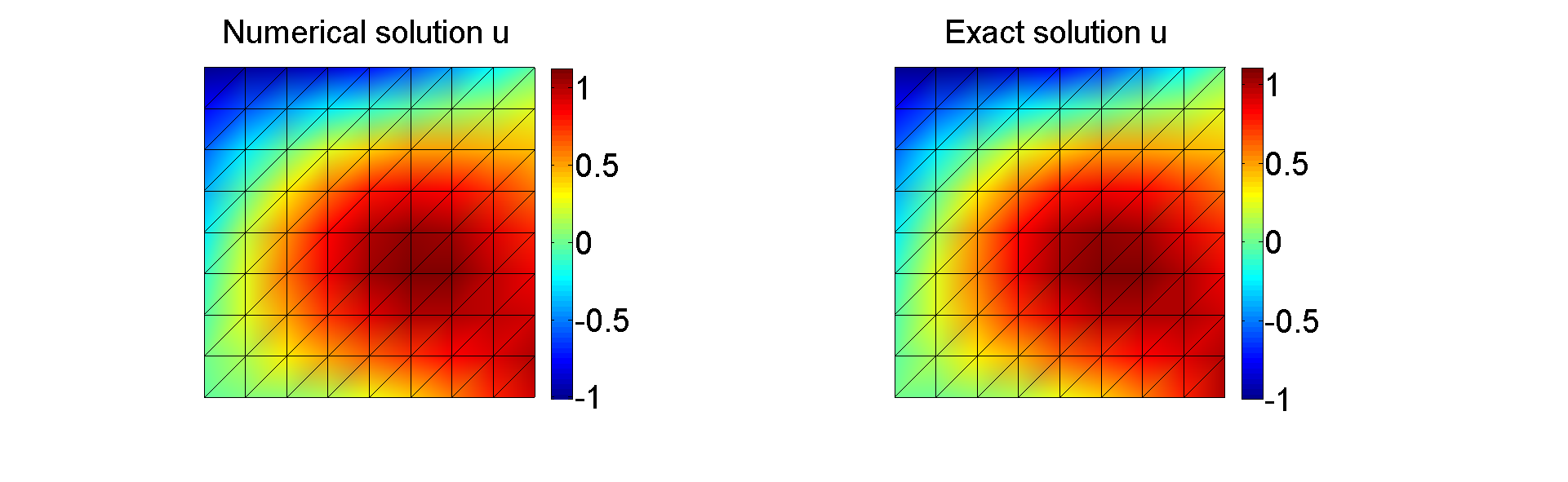}
\includegraphics [width=1.0\textwidth]{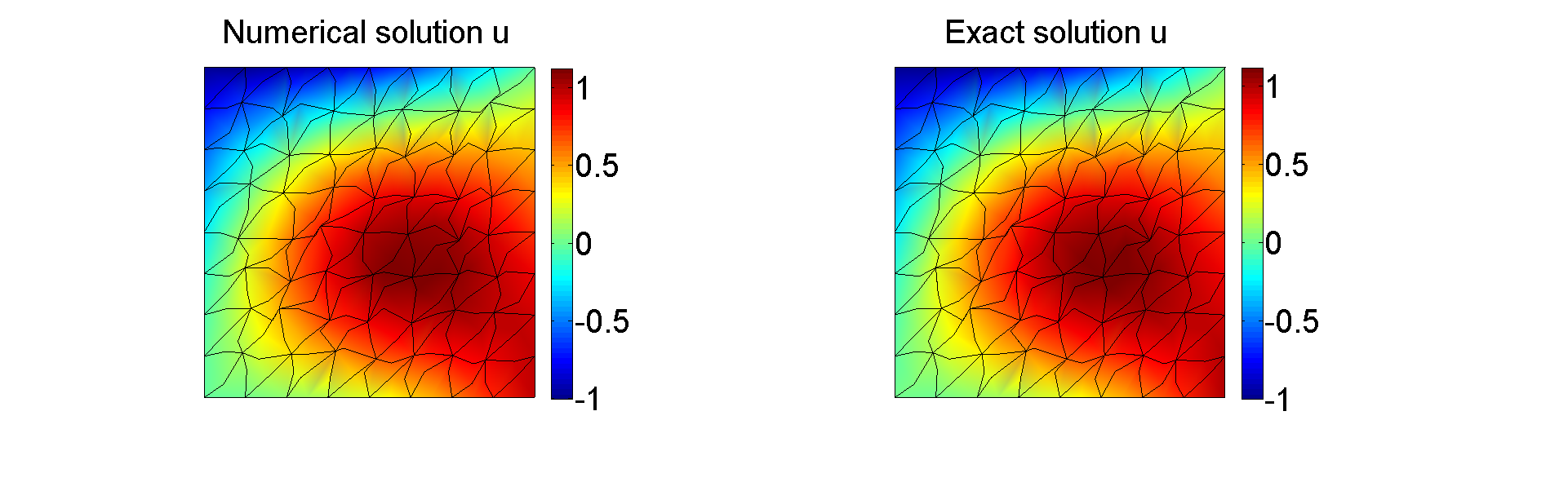}
\includegraphics [width=1.0\textwidth]{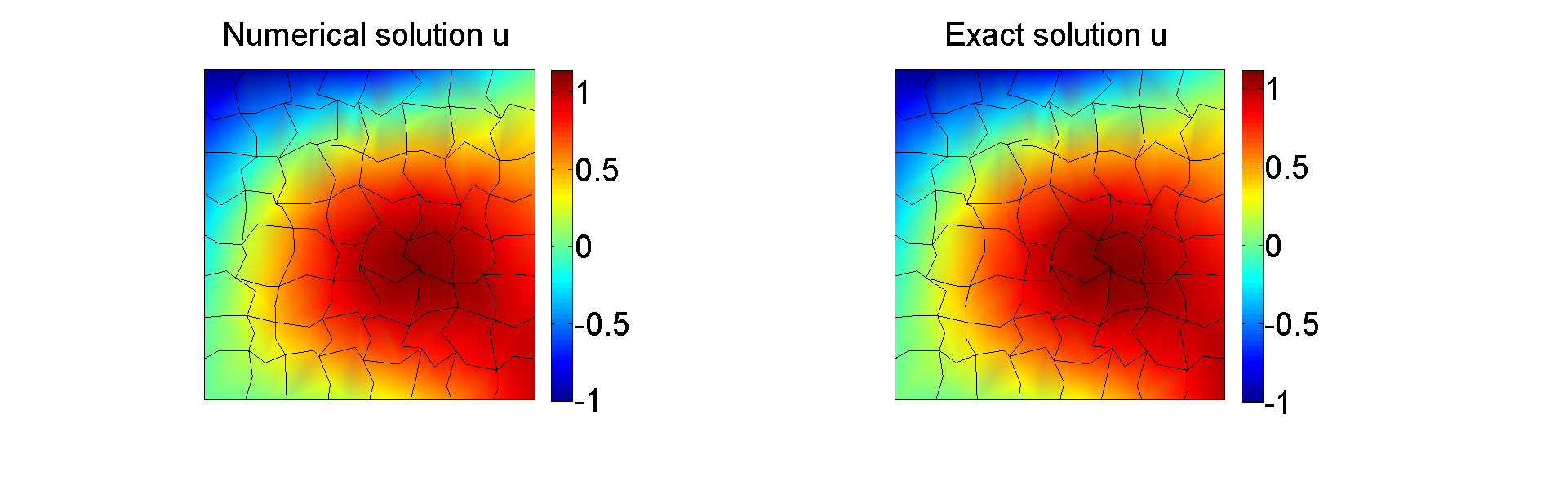}
\caption{\label{fig.comp_solution} Comparison of numerical solutions obtained from SWG and the exact solution for the test problem \eqref{eq.testcase.3} on various polygonal partitions of $h=1/8$ and $\kappa=1$.}
\end{figure}

\subsection{Numerical results on a non-convex domain}
The SWG scheme with the stabilization parameter $\kappa=4$ was applied to the test problem \eqref{eq.testcase.3} on the L-shaped domain $\Omega:=(-1,1)\times(-1,1)/ (0,1)\times(-1,0)$ partitioned into triangles or rectangles. The corresponding numerical results are summarized in Table \ref{tab.testcase5}, which shows a convergence of order $\O(h^2)$ in the $L^2$ norm for both the triangular and rectangular partitions. A superconvergence of order $\O(h^2)$ was observed in the discrete $H^1$ norm on rectangular partitions, while the optimal order of convergence with $r=1$ is confirmed numerically on triangular partitions. It should be pointed out that the $H^2$-regularity assumption \eqref{dual-regularity} is not valid for non-convex polygonal domains so that the optimal order of error estimate \eqref{eq.L2} is not known theoretically on the L-shaped domain. The numerical results therefore outperform the theory in the usual $L^2$ norm.

\begin{table}[h]
{\small
\begin{center}
\caption{Error and convergence performance of the SWG scheme \eqref{equation.SWG} for test case \eqref{eq.testcase.3} on Lshape domain, $\kappa = 4$.}\label{tab.testcase5}
\begin{tabular}{||c|cc|cc|cc|cc||}
\hline
     & \multicolumn{4}{| c |}{Triangular mesh} & \multicolumn{4}{| c |}{Square mesh}\\
\hline
$h^{-1}$ & $\|u_h-u \|_{0}$ & Rate & $\|u_h-u\|_{1}$ & Rate & $\|u_h-u \|_{0}$ & Rate & $\|u_h-u\|_{1}$ & Rate\\
\hline
 8  &   2.01e-02 &     -  &   4.31e-02  &    -  &   1.42e-02 &     -  &   2.55e-01  &    -\\
 16 &   5.02e-03 &  2.00  &   1.08e-02  & 2.00  &   3.56e-03 &  2.00  &   1.27e-01  & 1.01\\
 32 &   1.25e-03 &  2.00  &   2.70e-03  & 2.00  &   8.90e-04 &  2.00  &   6.35e-02  & 1.00\\
 64 &   3.14e-04 &  2.00  &   6.76e-04  & 2.00  &   2.23e-04 &  2.00  &   3.18e-02  & 1.00\\
 128&   7.84e-05 &  2.00  &   1.69e-04  & 2.00  &   5.57e-05 &  2.00  &   1.59e-02  & 1.00\\
\hline
\end{tabular}
\end{center}
}
\end{table}

\begin{figure}[h]
\includegraphics [width=1.0\textwidth]{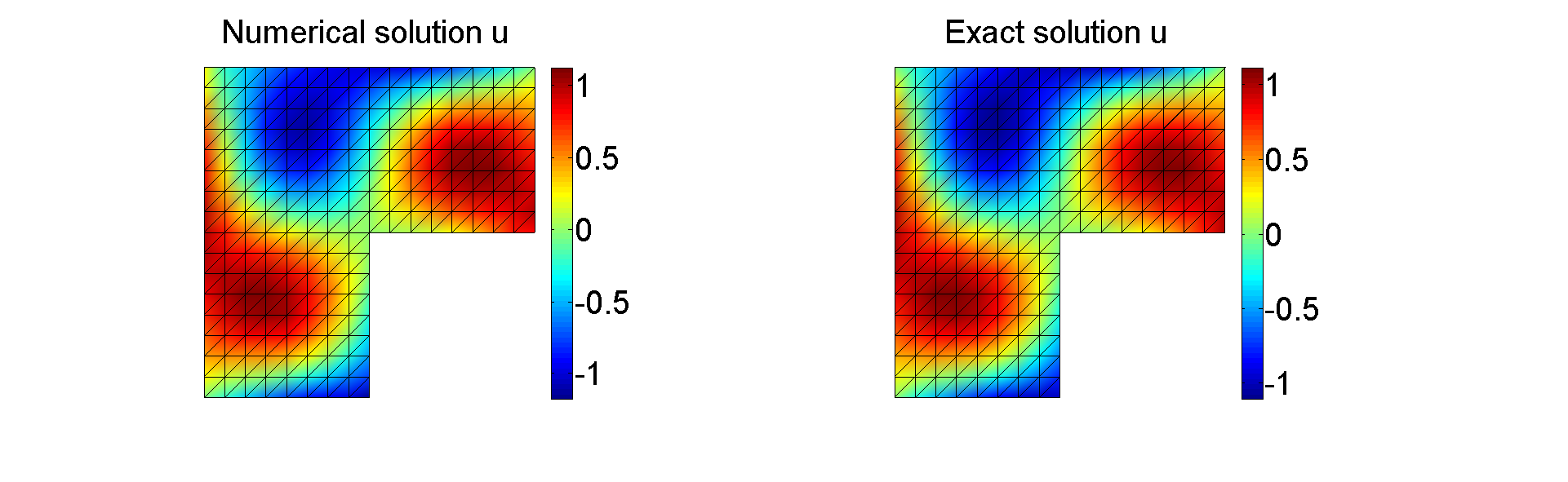}
\includegraphics [width=1.0\textwidth]{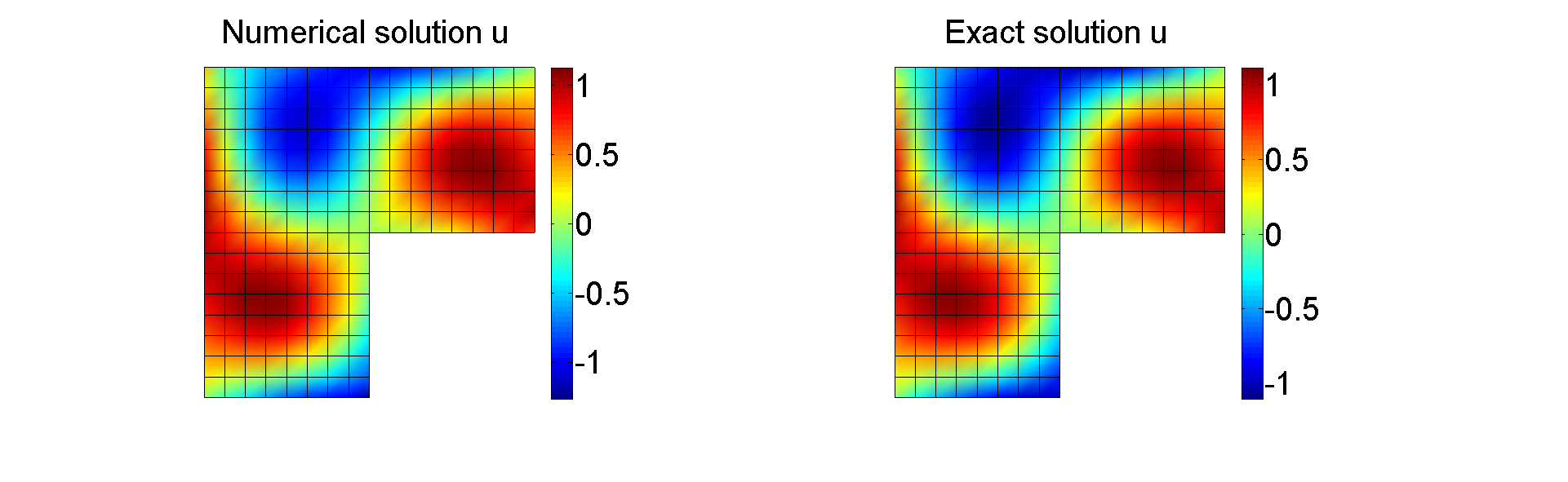}
\caption{\label{fig.comp_solution_Lshape} Comparison of numerical solution obtained from SWG and the exact solution for test case \eqref{eq.testcase.3} on L-shaped domain with $h=1/8$.}
\end{figure}


\newpage\newpage

\vfill\eject

\end{document}